\newtheorem{theorem}{Theorem}[section]
\newaliascnt{lemma}{theorem}
\newtheorem{lemma}[lemma]{Lemma}
\newaliascnt{proposition}{theorem}
\newtheorem{proposition}[proposition]{Proposition}
\newaliascnt{corollary}{theorem}
\newtheorem{corollary}[corollary]{Corollary}
\newaliascnt{definition}{theorem}
\newtheorem{definition}[definition]{Definition}
\theoremstyle{remark}
\newaliascnt{remark}{theorem}
\newtheorem{remark}[remark]{Remark}
\newaliascnt{example}{theorem}
\newtheorem{example}[example]{Example}
\setlist[enumerate]{label=(\arabic*), itemjoin=\qquad}
\newcommand{\Net}[3]{\ensuremath{\mathrm{Net}_{#1,#2,#3}}}
\newcommand{\Raw}[3]{\ensuremath{\mathrm{Raw}_{#1,#2,#3}}}
\newcommand{\Xval}[2]{\ensuremath{X_{#1,#2}}}
\newcommand{\muval}[2]{\ensuremath{\mu_{#1}(#2)}}
\newcommand{\minval}[2]{\ensuremath{\min(#1, #2)}}
\newcommand{\floor}[1]{\left\lfloor #1 \right\rfloor}
\newcommand{\Fl}[1]{\mathrm{Fl}(\C^{#1})}
\newcommand{\SL}[1]{\mathrm{SL}_{#1}(\C)}
\newcommand{\fl}[1]{\lfloor#1\rfloor}
\newcommand{\bra}[1]{\left(#1\right)}
\newcommand{\cbra}[1]{\left\{#1\right\}}
\newcommand{\Mbox}{%
{\setlength{\fboxrule}{1.5pt}\fbox{2002}}%
}
\newcommand{\Lbox}{%
{\setlength{\fboxrule}{1.5pt}\fbox{20}}%
}
\newcommand{\Rbox}{%
{\setlength{\fboxrule}{1.5pt}\fbox{02}}%
}
\newcommand{\kbox}{%
{\fbox{1}}%
}
\newcommand{\nkbox}{%
{\fcolorbox{black}{lightgray}{1}}%
}
\def\a{\alpha}
\def\w{\varpi}
\def\S{\mathfrak{S}}
\def\Z{\mathbb{Z}}
\def\Q{\mathbb{Q}}
\def\C{\mathbb{C}}
\numberwithin{equation}{section}
\title{Products of Chern Classes and Chern Numbers on~the~Permutohedral~Variety}
\author{Hideya Kuwata}
\address{Department of Integrated Systems Engineering, Kindai University Technical College
\\ 
}
\email{hideya0813@gmail.com}
\begin{document}
\begin{abstract}
\noindent
A root system $\Phi$ of rank $n$ determines an $n$-dimensional smooth projective toric variety $X(\Phi)$ associated with the fan of its Weyl chambers. For the root system of type $A_n$, this variety is the well-known permutohedral variety $X_{A_n}$. Using purely combinatorial methods, we obtain an explicit closed formula expressing the product of Chern classes $c_k c_{n-k}$ as a multiple of the top Chern class $c_n$ in the rational cohomology ring $H^*(X_{A_n};\Q)$. The resulting coefficient, which depends only on $k$ and $n$, is given by a closed-form expression. As an application, we compute the Chern number~$\langle c_k c_{n-k}, [X_{A_n}] \rangle$.
\end{abstract}
\maketitle

\section{Introduction}
The $n$-dimensional permutohedral variety $X_{A_n}$ is a smooth projective toric variety associated with the fan of Weyl chambers for the root system of type $A_n$. This variety is a central object in algebraic combinatorics, with deep connections to the representation theory of symmetric groups 
\cite{Procesi1990,Stembridge1994} and an analog of Schubert calculus \cite{Abe2015}. This variety plays an important role in recent developments; for instance, it serves as a key example of a regular semisimple Hessenberg variety~\cite{DeMariProcesiShayman1992} and plays a central role in the study of log-concavity for matroids~\cite{HuhKatz2012}.

The cohomology of the permutohedral variety $X_{A_n}$ is determined by the combinatorial structure of the root system of type $A_n$. Specifically, the total Chern class is given by the product $\prod_{i=1}^n (1+[\alpha_i])$, where $[\alpha_i]$ is the cohomology class in $H^*(X_{A_n};\Q)$ associated with each simple root $\alpha_i$~\cite{Klyachko1995}. Expanding the product for the total Chern class, we see that the $k$-th Chern class $c_k$ is the $k$-th elementary symmetric function in the classes $[\alpha_1], \dots, [\alpha_n]$. In particular, the top Chern class is their product $c_n = [\alpha_1]\cdots[\alpha_n]$.
The pairing of $c_n$ with the fundamental homology class $[X_{A_n}]$ yields the Euler characteristic of $X_{A_n}$, which is~ $(n+1)!$.
In this paper, we study the product of the Chern classes $c_k$ and $c_{n-k}$. We express this product as an explicit multiple of the top Chern class $c_n$ and provide a combinatorial formula for the coefficient.

\begin{theorem}[\autoref{thm:MainTheorem}]
Let $k$ be an integer satisfying $0\leq k\leq n$. The product of Chern classes $c_k c_{n-k}$ is given by the relation
\[
c_kc_{n-k}=\mu_k(n)c_n,
\]
where the coefficient $\mu_k(n)$ is
\begin{equation}
    \mu_k(n)=\sum_{j=0}^{\mathclap{\lfloor k/2\rfloor}}\left(\frac{1}{12}\right)^j\binom{k-j}{j}\binom{n-k-j}{j}.
\end{equation}
\end{theorem}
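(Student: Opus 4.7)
Since $H^{2n}(X_{A_n};\Q)$ is one-dimensional, generated by $c_n=[\alpha_1]\cdots[\alpha_n]$, every degree-$n$ monomial in the simple-root classes is automatically a rational multiple of $c_n$; the coefficient $\mu_k(n)$ is thus well defined, and the task is to compute it in closed form.

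The plan is to expand $c_k c_{n-k}=e_k([\alpha_\bullet])\,e_{n-k}([\alpha_\bullet])$ and group pairs $(I,J)\subseteq[n]$ with $|I|=k$, $|J|=n-k$ according to their intersection $P=I\cap J$ of size $p$ and symmetric difference $Q=I\triangle J$ of size $n-2p$. Counting, for each fixed disjoint pair $(P,Q)$, the $\binom{n-2p}{k-p}$ ways of splitting $Q$ into the $I$-only and $J$-only halves, this yields
\[
c_k c_{n-k}=\sum_{p\ge 0}\binom{n-2p}{k-p}\sum_{\substack{P,Q\subseteq[n]\\P\cap Q=\emptyset,\,|P|=p,\,|Q|=n-2p}}\prod_{i\in P}[\alpha_i]^2\prod_{j\in Q}[\alpha_j],
\]
reducing the problem to computing, for each ``doubled monomial'' $M_{P,Q}=\prod_P[\alpha_i]^2\prod_Q[\alpha_j]$, its coefficient $[M_{P,Q}:c_n]\in\Q$ with respect to the basis vector $c_n$.

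The main technical step is to derive a closed formula for $[M_{P,Q}:c_n]$ in terms of the arrangement of $P$ in the Dynkin chain $1{-}2{-}\cdots{-}n$. Using the toric structure of $X_{A_n}$ coming from its Weyl-chamber fan---either via the Stanley--Reisner and linear-equivalence relations on divisor classes, or via a closed intersection-number formula for monomials in the $[\alpha_i]$'s of the type found in the combinatorics of the permutohedron (e.g., mixed Eulerian numbers)---one derives a reduction rule for each factor $[\alpha_i]^2$ in terms of its Dynkin neighbours. Iterating this rule should yield a local product formula for $[M_{P,Q}:c_n]$ whose elementary factor per ``essential'' doubling pattern is $1/12$. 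This is the step I expect to be the main obstacle: pinning down the exact quadratic relation in $H^*(X_{A_n};\Q)$ and verifying that the iterated reductions collapse cleanly into the advertised local constant, particularly since naive squarings introduce cross-terms whose cancellation requires careful accounting along the Dynkin chain.

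Granted the reduction formula, the remaining step is combinatorial bookkeeping: re-summing over $(P,Q)$ of fixed doubling size and applying a Vandermonde-type convolution together with a reindexing $p\mapsto j$ that tracks only the essential doublings, one converts the double sum into the claimed form $\mu_k(n)=\sum_{j=0}^{\lfloor k/2\rfloor}(1/12)^j\binom{k-j}{j}\binom{n-k-j}{j}$. The manifest symmetry $\mu_k(n)=\mu_{n-k}(n)$ induced by $c_k c_{n-k}=c_{n-k}c_k$, and the sanity check $\mu_1(n)=1$ (which forces $c_1 c_{n-1}=c_n$, an interesting identity in its own right), provide useful consistency checks at every stage.
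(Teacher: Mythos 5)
Your overall strategy is the same as the paper's: expand $c_kc_{n-k}$ as monomials in the $[\alpha_i]$, group terms by the set $P=I\cap J$ of doubled indices (your multiplicity $\binom{n-2p}{k-p}$ is exactly the paper's factor $\binom{n-2i}{k-i}$ relating $X_{k,i}$ to $X_{i,i}$), reduce each doubled monomial to a rational multiple of $c_n$, and re-sum. The sanity checks you propose ($\mu_k(n)=\mu_{n-k}(n)$ and $\mu_1(n)=1$) are both valid. However, the step you yourself flag as the main obstacle is precisely where the substance of the paper lies, and it is not carried out; moreover, your guess for its shape is not right. The coefficient $[M_{P,Q}:c_n]$ is \emph{not} a local product with factor $1/12$ per ``essential doubling.'' What actually happens is: (i) most doubled monomials vanish --- the non-vanishing ones are exactly those whose exponent vector decomposes into blocks $(1)$, $(2,0)$, $(0,2)$, which is a nontrivial criterion proved via the relations $[\alpha_i][\varpi_i]=0$ and an induction on alternating $2,0,2,\dots,2$ runs; (ii) the surviving monomials reduce with local factors $-\tfrac12$ for each $(2,0)$ or $(0,2)$ block and $\tfrac13$ for each merged $(2,0,0,2)$ block, so the coefficient depends on the adjacency pattern of the zero and two positions, not only on $P$; (iii) counting the block arrangements requires an inclusion--exclusion (the paper's Raw-count versus Net-count) to handle the fact that an adjacent $(2,0)(0,2)$ pair must be treated as a single $(2,0,0,2)$ block; and (iv) the constant $1/12$ only emerges at the very end, from a binomial-theorem resummation combining the $(-\tfrac12)^{\ell+r}$ and $(\tfrac13)^m$ contributions, followed by a Vandermonde-type identity. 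Without establishing the vanishing criterion, the two reduction formulas, and the inclusion--exclusion, the argument does not close; as written it is a correct outline of the paper's approach with its central computations left as conjecture.
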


As a direct result, pairing this class with the fundamental homology class $[X_{A_n}]$ yields the following formula for the Chern number.

\begin{corollary}[\autoref{cor:Chern-number}]
The Chern number $\langle c_k c_{n-k}, [X_{A_n}] \rangle$ is given by
\[
\langle c_k c_{n-k}, [X_{A_n}] \rangle=(n+1)!\mu_k(n).
\]
\end{corollary}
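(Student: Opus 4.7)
This corollary is a one-line deduction from the main theorem together with the well-known value of the top Chern number on the permutohedral variety, so my plan is essentially to chain two identities.

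First I would invoke \autoref{thm:MainTheorem}, which gives the relation $c_k c_{n-k} = \mu_k(n)\, c_n$ inside $H^{2n}(X_{A_n};\Q)$. Pairing is $\Q$-linear in the cohomology slot, so applying $\langle\,\cdot\,,[X_{A_n}]\rangle$ to both sides yields
\[
\langle c_k c_{n-k}, [X_{A_n}] \rangle \;=\; \mu_k(n)\,\langle c_n, [X_{A_n}] \rangle.
\]
Second, I would quote the Euler characteristic computation already recalled in the introduction: the top Chern number of a smooth complex projective variety equals its topological Euler characteristic, and for $X_{A_n}$ this is $\chi(X_{A_n}) = (n+1)!$, reflecting the fact that the maximal cones of the fan of Weyl chambers are in bijection with the Weyl group $\S_{n+1}$. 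Substituting $\langle c_n, [X_{A_n}] \rangle = (n+1)!$ into the previous line yields the claimed identity $\langle c_k c_{n-k}, [X_{A_n}] \rangle = (n+1)!\,\mu_k(n)$.

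There is no real obstacle to overcome here; all the content is packed into \autoref{thm:MainTheorem}. The only thing worth flagging for the reader is \emph{why} $\langle c_n, [X_{A_n}]\rangle = (n+1)!$, which can be justified either via the Gauss--Bonnet-type identity (top Chern number equals Euler characteristic for smooth projective varieties) or directly from the toric description, using that the fixed points of the torus action on $X_{A_n}$ correspond to the maximal Weyl chambers and thus to elements of $\S_{n+1}$. Either way, this is standard and can be cited in one sentence, so the proof of the corollary reduces to a single display.
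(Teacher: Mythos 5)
Your proposal is correct and matches the paper's own proof: both apply $\langle\,\cdot\,,[X_{A_n}]\rangle$ to the relation $c_k c_{n-k}=\mu_k(n)c_n$ from \autoref{thm:MainTheorem}, use linearity of the pairing, and substitute $\langle c_n,[X_{A_n}]\rangle=(n+1)!$ as recalled in \eqref{eq:euler-char-pairing}. Your extra remarks on why the top Chern number equals the Euler characteristic are fine but not needed beyond the citation already given.
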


The remainder of this paper is organized as follows. In Section~2, we review the necessary background on the permutohedral variety $X_{A_n}$ and its cohomology ring. Sections~3 and~4 are devoted to developing the combinatorial framework required for our main result. Finally, the proof of the main theorem is presented in Section~5.

\section{Preliminaries}
\subsection{The Permutohedral Variety for Type \texorpdfstring{$A_n$}{An}}
Let $\{t_1,\dots, t_{n+1}\}$ be the standard basis in~$\mathbb{R}^{n+1}$ with the usual inner product, and consider the hyperplane 
\begin{equation}
\label{eq:Euclidean-Space}
E = \cbra{(x_1,\dots,x_{n+1}) \in \mathbb{R}^{n+1} ~\bigg|~ \sum_{i=1}^{n+1} x_i = 0}.    
\end{equation}
The root system $\Phi$ of type $A_n$ is realized in $E$ by the set of vectors 
$$
\Phi = \{t_i - t_j\in E \mid 1 \le i, j \le n+1, i \ne j\}.
$$
 We choose the simple roots to be $\alpha_i = t_i - t_{i+1}$ for $i=1, \dots, n$. For the root system of type $A_n$, the Weyl group is the symmetric group $W = \S_{n+1}$, which acts on the space $E$ by permuting the coordinates.
 
Let $M\subset E$ be the root lattice of $\Phi$ and $N\subset E^*$ be the coweight lattice of $\Phi$, where $E^*$ is the dual space of~$E$. Then $M$ is the dual lattice of $N$ with respect to the natural pairing~$\langle -,- \rangle$. The fundamental coweights $\w^{\vee}_1, \dots, \w^{\vee}_n$ in $E^*$ form the basis dual to the simple roots, defined by the relations $\langle \w^{\vee}_i, \alpha_j \rangle = \delta_{ij}$. The set of all coweights of the root system is given by the Weyl group orbit of the fundamental coweights, $\Phi^* = \bigcup_{u \in \S_{n+1}} \{u\w^{\vee}_1, \dots, u\w^{\vee}_n\}$.
The set of all coweights $\Phi^*$ generates a complete nonsingular fan in $E^*$, which we call the permutohedral fan and denote by $\Delta_{A_n}$. The maximal cones of $\Delta_{A_n}$ are precisely the Weyl chambers.
 The $n$-dimensional permutohedral variety, which we denote by $X_{A_n}$, is the smooth projective toric variety associated with the fan $\Delta_{A_n}$.

\subsection{The Cohomology Ring of \texorpdfstring{$X_{A_n}$}{XAn}}
In the case of type $A_n$, the torus-invariant prime divisors, which correspond to the rays of the fan $\Delta_{A_n}$, can be identified with the set of all non-empty proper subsets of~$[n+1]:=\{1,2,\dots,n+1\}$. Following~\cite{Abe2015}, we identify the set of rays of the fan $\Delta_{A_n}$ with the set of all non-empty proper subsets of $[n+1]$. This identification is given by the bijection:
\begin{align*}
    \Phi^* &\longrightarrow 2^{[n+1]} \setminus \{\emptyset, [n+1]\} \\
    u\w^{\vee}_i &\longmapsto \{u(1), \dots, u(i)\}
\end{align*}
    for $u \in \S_{n+1}$ and $1 \le i \le n$. We denote by $\tau_{S} \in H^2(X_{A_n}; \Z)$ the cohomology class of the torus-invariant prime divisor corresponding to a subset $S$.
The integer cohomology ring $H^*(X_{A_n}; \Z)$ is isomorphic to the quotient of the polynomial ring in the variables $\{\tau_{S}\}$ by the ideal generated by two types of relations:
\begin{enumerate}
  \item The Stanley-Reisner relations: 
  \begin{align}
  \label{eq:Stanley-Reisner-ideal}
   \tau_{S_1} \cdots \tau_{S_q} = 0 \qquad \text{if the subsets $S_1, \dots, S_q$ do not form a chain under inclusion.}
  \end{align}
    \item The linear relations: For any root $\alpha \in \Phi$, we have the relation $\sum_{x \in \Phi^*} \langle x, \alpha \rangle \tau_x = 0$. For the root system of type $A_n$, taking a root $\alpha = t_k - t_l$ for each $k,\ell\in[n+1], k \neq \ell$ translates this into a purely combinatorial relation among the $\tau_S$ classes~\cite[Eq.(6)]{Abe2015}:
    \[
   \sum_{\substack{\emptyset\subsetneq S\subsetneq[n+1] \\ k\in S, l\notin S}} \tau_S - \sum_{\substack{\emptyset\subsetneq S\subsetneq[n+1] \\ k\notin S, l\in S}} \tau_S = 0\qquad\text{for each }k,\ell\in[n+1].
    \]
\end{enumerate}

\subsection{Chern Classes of the Permutohedral Variety \texorpdfstring{$X_{A_n}$}{XAn}}
For an $n$-dimensional smooth toric variety $X$ with a fan~$\Delta$, the $k$-th Chern class of its tangent bundle $T_X$ is given by the sum of the cohomology classes of the orbit closures $X_{\sigma}$ corresponding to the $k$-dimensional cones $\sigma$~\cite{Fulton1993}:
\[
c_k(T_X) = \sum_{\sigma \in \Delta(k)} [X_\sigma],
\]
where $\Delta(k)$ denotes the set of $k$-dimensional cones of the fan $\Delta$.

For the permutohedral variety $X_{A_n}$, a $k$-dimensional cone $\sigma \in \Delta_{A_n}(k)$ corresponds to a chain of nested subsets $S_1 \subsetneq S_2 \subsetneq \dots \subsetneq S_k$ of $[n+1]$. The cohomology class of its orbit closure is given by the product of the corresponding divisor classes: 
\begin{align}
\label{eq:divisor-corresponding-k-cones}
[X_\sigma] = \tau_{S_1} \cdots \tau_{S_k}. 
\end{align}

The total Chern class of $T_{X_{A_n}}$ is given by a simple formula due to Klyachko~\cite{Klyachko1995}:
\[
c(T_{X_{A_n}}) = \prod_{i=1}^n (1+[\alpha_i]),
\]
where each class $[\alpha_i]$ is the sum of all torus-invariant divisor classes $\tau_S\in H^2(X_{A_n};\Z)$ for which the subset $S$ has cardinality~$i$:
\begin{align}
\label{eq:alpha-tau}
[\alpha_i] = \sum_{\substack{S \subsetneq [n+1] \\ |S|=i}} \tau_S.
\end{align}
Expanding the product formula for the total Chern class, the $k$-th Chern class $c_k=c_k(T_{X_{A_n}})$ is given by the $k$-th elementary symmetric function in the classes $[\alpha_i]$:
\[
c_k = e_k([\alpha_1], \dots, [\alpha_n]) = \sum_{1 \le i_1 < \dots < i_k \le n} [\alpha_{i_1}] \cdots [\alpha_{i_k}].
\]
When we substitute the expression for each $[\alpha_{i_j}]$ from \eqref{eq:alpha-tau}, a typical term in the sum becomes:
\[
\left( \sum_{|S_1|=i_1} \tau_{S_1} \right) \cdots \left( \sum_{|S_k|=i_k} \tau_{S_k} \right).
\]
The Stanley-Reisner relations~\eqref{eq:Stanley-Reisner-ideal} state that a product $\tau_{S_1} \cdots \tau_{S_k}$ vanishes unless the subsets form a chain under inclusion. Since we have $i_1 < i_2 < \dots < i_k$, the only non-vanishing terms in the expansion are those for which $S_1 \subsetneq S_2 \subsetneq \dots \subsetneq S_k$. A key combinatorial property of the permutohedral fan is that each such product, which corresponds to the class of a $k$-dimensional cone, appears in the expansion of $c_k$ exactly once. Thus, Klyachko's formula correctly recovers the general one:
\[
c_k = e_k([\alpha_1],\dots,[\alpha_n]) = \sum_{S_1 \subsetneq \dots \subsetneq S_k} \tau_{S_1} \cdots \tau_{S_k} = \sum_{\sigma \in \Delta_{A_n}(k)} [X_\sigma].
\]
In particular, the top Chern class is given by the product
\[
c_n = [\alpha_1] \cdots [\alpha_n].
\]
The pairing of the top Chern class $c_n$ with the fundamental homology class $[X_{A_n}]$ yields the Euler characteristic of $X_{A_n}$, which is the order of the Weyl group $\S_{n+1}$, $(n+1)!$:
\begin{equation}
\label{eq:euler-char-pairing}
\langle c_n, [X_{A_n}] \rangle = (n+1)!.
\end{equation}

\begin{example}
\label{example:totalChernclass}
Let us compute the total Chern class $c(T_{X_{A_2}})$. The non-empty proper subsets of $\{1,2,3\}$ are
\[
\{1\},\{2\},\{3\},\{1,2\},\{1,3\},\{2,3\}.
\]
The subsets of cardinality one, $\{1\},\{2\},\{3\}$, form a Weyl group orbit, as do the subsets of cardinality two, $\{1,2\},\{1,3\},\{2,3\}$. The classes $\alpha_1$ and $\alpha_2$ are therefore given by the sums over these orbits:
\[
[\a_1]=\tau_{\{1\}}+\tau_{\{2\}}+\tau_{\{3\}}, \qquad [\a_2]=\tau_{\{1,2\}}+\tau_{\{1,3\}}+\tau_{\{2,3\}}.
\]
The total Chern class is the product $(1+[\a_1])(1+[\a_2])$. The product $[\a_1][\a_2]$ simplifies due to the Stanley-Reisner relations. A term $\tau_{S_1}\tau_{S_2}$ with $|S_1|=1$ and $|S_2|=2$ vanishes unless the corresponding rays span a 2-dimensional cone in the fan, which occurs if and only if $S_1 \subsetneq S_2$. For instance, the term $\tau_{\{1\}}\tau_{\{2,3\}}$ vanishes because $\{1\} \not\subsetneq \{2,3\}$. The non-vanishing terms are precisely those of the form $\tau_{\{i\}}\tau_{\{i,j\}}$. Thus, the full expansion of the total Chern class is
\begin{align*}
c(T_{X_{A_2}}) = 1 &+ (\tau_{\{1\}}+\tau_{\{2\}}+\tau_{\{3\}}+\tau_{\{1,2\}}+\tau_{\{1,3\}}+\tau_{\{2,3\}}) \\
&+ (\tau_{\{1\}}\tau_{\{1,2\}}+\tau_{\{1\}}\tau_{\{1,3\}} +\tau_{\{2\}}\tau_{\{1,2\}}+\tau_{\{2\}}\tau_{\{2,3\}} +\tau_{\{3\}}\tau_{\{1,3\}}+\tau_{\{3\}}\tau_{\{2,3\}}).
\end{align*}
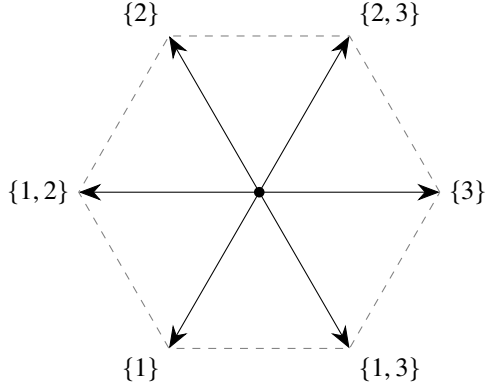
\begin{figure}[H]
\centering
\begin{tikzpicture}[scale=2]
\coordinate (O) at (0,0);
\coordinate (r1) at (240:1.2); 
\coordinate (r2) at (120:1.2); 
\coordinate (r3) at (0:1.2);
\coordinate (r23) at (60:1.2); 
\coordinate (r13) at (300:1.2); 
\coordinate (r12) at (180:1.2); 
\draw[-{Stealth[length=3mm]}] (O) -- (r1) node[anchor=north east] {$\{1\}$};
\draw[-{Stealth[length=3mm]}] (O) -- (r2) node[anchor=south east] {$\{2\}$};
\draw[-{Stealth[length=3mm]}] (O) -- (r3) node[anchor=west] {$\{3\}$};
\draw[-{Stealth[length=3mm]}] (O) -- (r23) node[anchor=south west] {$\{2,3\}$};
\draw[-{Stealth[length=3mm]}] (O) -- (r13) node[anchor=north west] {$\{1,3\}$};
\draw[-{Stealth[length=3mm]}] (O) -- (r12) node[anchor=east] {$\{1,2\}$};
\draw[dashed, thin, gray] (r3) -- (r23) -- (r2) -- (r12) -- (r1) -- (r13) -- cycle;
\fill (O) circle (1pt);
\end{tikzpicture}
\caption{The fan of the permutohedral variety $X_{A_2}$. Each ray is labeled by the corresponding non-empty proper subset of $\{1,2,3\}$.}
\label{fig:fan_A2}
\end{figure}
\end{example}

\subsection{Geometric Origin of the Cohomology Classes}
The cohomology classes $[\a_i]$ central to this paper have a geometric origin arising from the theory of flag varieties. The permutohedral variety $X_{A_n}$ can be realized as the closure of a generic orbit of the maximal torus $T_n$ of diagonal matrices in $\SL{n+1}$ acting on the full flag variety $\Fl{n+1} = \SL{n+1}/B$~\cite{Klyachko1985, Klyachko1995}, where $B$ is the Borel subgroup of upper triangular matrices in $\SL{n+1}$. 

Let $\lambda:T_n\to\C^*$ be a weight. This character extends to the Borel subgroup $B$. Following Brion \cite{Brion2005}, we define the line bundle $L_\lambda$ associated with the weight $\lambda$ as the quotient
\[
L_\lambda := (\SL{n+1} \times\C) / B,
\]
where the right action of $B$ on the product is given by 
$$
(g,v) \cdot b := (gb, \lambda(b)^{-1}v) \quad\text{ for } g \in \SL{n+1}, v \in \C, b \in B.
$$
The class $[\alpha_i]$ is the first Chern class of the line bundle $L_{\a_i}$ on $\Fl{n+1}$ associated with the simple root~$\alpha_i$:
\begin{equation}
\label{eq:comologyclass-root}
[\a_i] = c_1(L_{\a_i}) \in H^2(\Fl{n+1}; \Q).
\end{equation}
The class $[\a_i]$, which was defined combinatorially in the previous section via \eqref{eq:alpha-tau}, is precisely the restriction of the first Chern class of the line bundle $L_{\a_i}$ to the subvariety $X_{A_n} \subset \Fl{n+1}$. That is,
\[
[\a_i] = c_1(L_{\a_i}|_{X_{A_n}}) \in H^2(X_{A_n}; \Q).
\]
Similarly, we define the classes $[\w_i]$ corresponding to the fundamental weights $\w_i$,
where the fundamental weights are characterized by the relations
\[
\langle \w_i, \alpha_j^\vee \rangle = \delta_{ij}, \qquad 1 \le i,j \le n,
\]
with $\w_i \in E$ and the coroots $\alpha_j^\vee \in E^*$ (see subsection 2.1).
We then set
\begin{equation}
\label{eq:cohomologyclass-weight}
[\w_i] := c_1\!\left(L_{\w_i}\big|_{X_{A_n}}\right) \in H^2(X_{A_n}; \mathbb{Q}).
\end{equation}

The rational cohomology ring $H^*(X_{A_n};\mathbb{Q})$ admits a natural action of the Weyl group~$\mathfrak{S}_{n+1}$. The structure of the invariant subring $H^*(X_{A_n}; \Q)^{\S_{n+1}}$ is well studied~\cite{Klyachko1995}. This subring is generated by the classes $[\w_1], \dots, [\w_n]$, which satisfy the quadratic relations
\begin{align}\label{eq:quadratic-relations-prelim}
 [\alpha_i][\w_i] = 0 \qquad \text{for } 1 \le i \le n.
\end{align}

The relation $\alpha_i = -\w_{i-1} + 2\w_i - \w_{i+1}$ in the weight lattice corresponds to an isomorphism of line bundles:
\[
L_{\alpha_i} \cong L_{\w_{i-1}}^\vee \otimes L_{\w_i}^{\otimes 2} \otimes L_{\w_{i+1}}^\vee,
\]
where $L^{\vee}$ denotes the dual of the line bundle $L$.
For a nonsingular projective toric variety such as $X_{A_n}$, the first Chern class map $c_1$ induces a group isomorphism from the character lattice of the maximal torus $T_n$ to the second cohomology group $H^2(\Fl{n+1};\Z)$. This isomorphism translates the tensor product structure of line bundles into the additive structure of cohomology. Consequently, the isomorphism of line bundles above yields the following linear relations in $H^2(X_{A_n}; \mathbb{Q})^{\S_{n+1}}$:
\begin{align}\label{eq:Cartan-matrix-relation-prelim}
 [\alpha_i] = -[\w_{i-1}] + 2[\w_i] - [\w_{i+1}] \qquad\text{for }1\le i\le n,
\end{align}
where we adopt the convention $[\w_0]=[\w_{n+1}]=0$. For readers unfamiliar with the theory of flag varieties, the classes $[\w_i]$ can be defined in terms of the classes $[\a_j]$ as the unique solution to the system of linear equations given by \eqref{eq:Cartan-matrix-relation-prelim}.

\begin{remark}
As seen in Subsection 2.2, the cohomology classes $[\alpha_i]$ can also be expressed in terms of the classes $\tau_S \in H^*(X_{A_n}; \mathbb{Z})$. However, calculating Chern classes directly using the $\tau_S$ presentation is more complicated due to the large number of generators and relations involved (see~\autoref{example:totalChernclass}). On the other hand, the cohomology classes $\alpha_i$ and $\omega_i$ satisfy useful algebraic relations such as \eqref{eq:quadratic-relations-prelim} and \eqref{eq:Cartan-matrix-relation-prelim}. Therefore, in what follows, we will primarily work with the classes $[\alpha_i]$ and $[\omega_i]$ within the invariant subring~$H^*(X_{A_n}; \mathbb{Q})^{\S_n}$.    
\end{remark}

\section{Combinatorial Structure and Enumeration of Monomials}
\subsection{Fundamental Block Decomposition}

For the remainder of this section, we extend the definition of the simple roots $\alpha_i$ for all integers $i$. 
Let $T_n$ be the maximal torus of diagonal matrices in $\SL{n+1}$, and let 
$$
P(A_n) = \mathrm{Hom}(T_n, \mathbb{C}^*)
$$
be its weight lattice. 
The simple roots of type $A_n$ are elements of this lattice, given by the formula
\[
\alpha_i = -\w_{i-1} + 2\w_i - \w_{i+1} \qquad \text{for } 1 \le i \le n,
\]
with the convention that $\w_0=0$ and $\w_{n+1}=0$. We extend this definition for all integers $i$ as follows:
\begin{align}
\label{eq:extended-varphi-alpha}
\alpha_i &:=
\begin{cases}
-\w_{i-1}+2\w_i-\w_{i+1}, & 1\leq i\leq n, \\[4pt]
-\w_1, & i=0, \\[4pt]
-\w_n, & i=n+1, \\[4pt]
0, & i<0 \text{ or } i>n+1.
\end{cases}
\end{align}
This extension is naturally motivated by the standard group embedding 
$$
\iota: \SL{n+1} \hookrightarrow \SL{n+3}\qquad A \mapsto \mathrm{diag}(1, A, 1).
$$
This embedding restricts to an inclusion of the maximal torus, $\iota|_{T_n}: T_n \hookrightarrow T_{n+2}$, where $T_{n+2}$ is the maximal torus in $\SL{n+3}$. This inclusion, in turn, yields a surjective restriction map on the corresponding weight lattices:
\[
\mathrm{res}: P(A_{n+2}) \to P(A_n), \quad \text{defined by} \quad \mathrm{res}(\lambda) := \lambda \circ (\iota|_{T_n}).
\]
Let $\w'_0, \dots, \w'_{n+1}$ be the fundamental weights of type $A_{n+2}$. The fundamental weights of type $A_n$ are identified with the images of those of type $A_{n+2}$ under this map: $\w_i = \mathrm{res}(\w'_i)$ for $1 \leq i \leq n$. The kernel of this map is generated by $\w'_0$ and $\w'_{n+1}$.
Calculating the images of the simple roots $\a_i'~(0\le i\le n+1)$ of type $A_{n+2}$ under this restriction map gives:
\begin{itemize}
    \item $\mathrm{res}(\a'_i) = -\w_{i-1} + 2\w_i - \w_{i+1} = \alpha_i$ for $1 \leq i \leq n$.
    \item $\mathrm{res}(\a'_0) = -\w_1$ and $\mathrm{res}(\a'_{n+1}) = -\w_n$.
\end{itemize}
It is precisely this correspondence that makes sense of our extended definition of $\alpha_i$ in \eqref{eq:extended-varphi-alpha}. The same holds for the corresponding cohomology classes.

As established in Section 2, we identify elements of the weight lattice with their corresponding second cohomology classes. From now on, by an abuse of notation, we will denote the classes $[\alpha_i]$ and $[\varpi_i]$ simply by $\alpha_i$ and $\varpi_i$ respectively.

\begin{proposition}[Fundamental Block Decomposition]
\label{prop:fundamental-decomposition}
The exponent vector of any non-vanishing monomial in the expansion of $c_k c_{n-k}$ can be expressed as a sequence of the following fundamental blocks:
\begin{enumerate}[(i)]
    \item The block $(1)$.
    \item The block $(2,0)$.
    \item The block $(0,2)$.
\end{enumerate}
\end{proposition}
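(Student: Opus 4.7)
The plan is to prove the contrapositive: if the exponent vector $(e_1,\dots,e_n)$ of a monomial $\alpha_1^{e_1}\cdots\alpha_n^{e_n}$ arising in the polynomial expansion of $c_k c_{n-k}$ cannot be written as a concatenation of the blocks $(1)$, $(2,0)$, $(0,2)$, then the monomial already vanishes in $H^*(X_{A_n};\mathbb{Q})$. Any such exponent vector satisfies $e_i\in\{0,1,2\}$ (since $c_k$ and $c_{n-k}$ are squarefree in $\alpha_1,\dots,\alpha_n$), $\sum_i e_i=n$, and $\#\{i:e_i=2\}=\#\{i:e_i=0\}$.

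The central algebraic input combines the linear relation $\alpha_i=-\varpi_{i-1}+2\varpi_i-\varpi_{i+1}$ (with the extended convention $\varpi_0=\varpi_{n+1}=0$) with the quadratic relation $\alpha_i\varpi_i=0$ from~\eqref{eq:quadratic-relations-prelim}. Multiplying through by $\alpha_i$ and applying the latter gives the basic identity
\[
\alpha_i^2=-\alpha_i\varpi_{i-1}-\alpha_i\varpi_{i+1}.
\]
From this I would derive a \emph{local vanishing rule}: whenever $e_i=2$ and both of the conditions ($i=1$ or $e_{i-1}\ge 1$) and ($i=n$ or $e_{i+1}\ge 1$) hold, the monomial is zero, because each of the two summands on the right of the basic identity contains either a boundary zero $\varpi_0$ or $\varpi_{n+1}$, or a pair $\alpha_{i\pm 1}\varpi_{i\pm 1}=0$ supplied by an adjacent factor. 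A first consequence is that every $2$ in a non-vanishing exponent vector must sit next to a $0$ inside $\{1,\dots,n\}$.

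The main argument is a global induction, as the local rule alone does not cover every non-decomposable exponent: configurations such as $(2,0,2,1,1,0)$ or $(0,1,1,0,2,2,0,2)$ have every $2$ adjacent to a $0$ yet still fail to decompose, because the available $0$ neighbours cannot be distributed among the $2$'s to form disjoint pair-blocks. I would induct on a suitable complexity measure (for instance, the number of $2$'s together with the leftmost parse-failure position). At each step, pick a $2$ whose pairing is contested, apply the basic identity, and discard the summands that die through $\alpha_{i\pm 1}\varpi_{i\pm 1}=0$; the surviving pieces carry extra $\varpi$-factors, and any accumulated $\varpi_j^2$ is expanded via the derived quadratic relation $2\varpi_j^2=\varpi_{j-1}\varpi_j+\varpi_j\varpi_{j+1}$ (obtained by multiplying $\alpha_j\varpi_j=0$ by $\varpi_j$). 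This feeds back into further killings or, when a term rewrites to a scalar multiple of itself $X=cX$ with $c\ne 1$, forces $X=0$ directly.

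The main obstacle is formalising this iteration. A single application of the identities does not dispose of the more delicate patterns; vanishing emerges only after forming $\varpi_j^2$ factors from nearby square terms and expanding them, which may reintroduce $\varpi_j^2$ and require a self-referential rewrite. The technical heart of the proof is organising the reductions into a terminating scheme in which each step either strictly decreases a well-chosen measure or produces an explicit $\alpha_j\varpi_j=0$ killer (or a non-trivial self-multiple that must vanish).
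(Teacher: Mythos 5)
Your algebraic ingredients are the right ones (the identity $\alpha_i^2=-\alpha_i\varpi_{i-1}-\alpha_i\varpi_{i+1}$, the relation $\alpha_j\varpi_j=0$, and the observation that $N_0=N_2$), and your ``local vanishing rule'' is exactly the base case $\alpha_{j-1}\alpha_j^2\alpha_{j+1}=0$ of what is needed. You also correctly diagnose that this local rule does not suffice. But at that point the proof stops: the ``global induction on a suitable complexity measure,'' with contested pairings, self-referential rewrites $X=cX$, and a terminating scheme still to be organised, is precisely the part that has to be carried out, and as written there is no argument that it terminates or that every non-decomposable vector is actually killed. This is a genuine gap, not a presentational one.

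The paper closes this gap with two concrete steps that your sketch is missing. First, a single clean inductive lemma (\autoref{lem:vanishing-conditions}): for every $k\ge 0$,
\[
\alpha_{j-1}\Bigl(\prod_{l=0}^{k}\alpha_{j+2l}^2\Bigr)\alpha_{j+2k+1}=0,
\]
proved by peeling off one square at a time via $\alpha_{j-1}\alpha_j^2\alpha_{j+2}^2=-\tfrac12\,\alpha_{j-1}\alpha_j\alpha_{j+1}\alpha_{j+2}^2$ and invoking the case $k-1$. This replaces your open-ended rewriting process by an induction on the length of the alternating pattern, with an explicit measure that visibly decreases. Second, a combinatorial step you do not address at all: one must show that a vector which fails to decompose into $(1)$, $(2,0)$, $(0,2)$ necessarily \emph{contains} such a killed pattern. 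The paper does this by splitting the vector into maximal alternating $02$-blocks; if every such block had $N_2(S)\le N_0(S)$ then summing and using $N_0=N_2$ would force equality blockwise, and an alternating block with equally many $0$s and $2$s decomposes into $(0,2)$'s or $(2,0)$'s. Hence a non-decomposable vector has a maximal block of the form $(2,0,\dots,0,2)$, whose neighbours are nonzero by maximality, so the lemma applies. Your examples $(2,0,2,1,1,0)$ and $(0,1,1,0,2,2,0,2)$ are both handled this way. To complete your proof you would need to supply both of these steps; once you do, you will essentially have reproduced the paper's argument rather than found a different route.
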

In what follows, we use the symbol $\star$ to denote any non-zero entry, i.e., $\star \in \{1,2\}$. To handle boundary conditions uniformly, we consider the augmented exponent vector $(v_0, v_1, \dots, v_n, v_{n+1})$, where the virtual entries $v_0$ and $v_{n+1}$ are defined to be of type $\star$. This convention corresponds to the extended definition of $\a_i$ in \eqref{eq:extended-varphi-alpha}.

For a sub-vector $S$ of an exponent vector, we let $N_j(S)$ denote the number of entries in $S$ equal to $j$. When $S$ is the entire exponent vector $v$, we simply write $N_j$ for $N_j(v)$.

\begin{lemma}[Vanishing Conditions for Exponent Vectors]
\label{lem:vanishing-conditions}
A monomial in the expansion of $c_k c_{n-k}$ vanishes if its augmented exponent vector contains a sub-vector of the form $(\star, S, \star)$, where $S$ is an alternating sub-vector of $0$s and $2$s satisfying $N_2(S) > N_0(S)$.
\end{lemma}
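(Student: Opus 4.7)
The plan is to reduce any such monomial to zero using only the quadratic relations $\alpha_j\varpi_j=0$ from~\eqref{eq:quadratic-relations-prelim} together with the Cartan-matrix relation $\alpha_j=-\varpi_{j-1}+2\varpi_j-\varpi_{j+1}$ from~\eqref{eq:Cartan-matrix-relation-prelim}. As preliminary identities I would record
\[
\alpha_j^2 \;=\; -\alpha_j(\varpi_{j-1}+\varpi_{j+1})
\qquad\text{and}\qquad
\varpi_j^2 \;=\; \tfrac12\bigl(\varpi_{j-1}\varpi_j+\varpi_j\varpi_{j+1}\bigr),
\]
both obtained by substituting the Cartan formula into $\alpha_j\varpi_j=0$. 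These two identities are the only algebraic tools used.

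Next I would suppose that the augmented exponent vector contains $(\star,S,\star)$ at positions $a,a+1,\dots,a+2m+2$, so that $S=(2,0,2,\dots,0,2)$ has $m+1$ twos at positions $a+2k+1$ for $0\le k\le m$. Applying the first identity at each two-position converts the part of the monomial supported on $S$ into
\[
(-1)^{m+1}\Bigl(\prod_{k=0}^{m}\alpha_{a+2k+1}\Bigr)\prod_{k=0}^{m}\bigl(\varpi_{q_k}+\varpi_{q_{k+1}}\bigr),
\]
where $q_k:=a+2k$ for $0\le k\le m+1$. The remaining part of the monomial is $\alpha_a^{v_a}\alpha_{a+2m+2}^{v_{a+2m+2}}$ together with the factors outside $(\star,S,\star)$, the latter playing no role. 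Expanding the product of $m+1$ binomials produces a sum over choices $c_k\in\{q_k,q_{k+1}\}$, each summand being a degree-$(m+1)$ monomial in the $\varpi_{q_j}$'s.

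I would then argue that every summand vanishes in one of two ways. Any summand involving the boundary weight $\varpi_{q_0}=\varpi_a$ or $\varpi_{q_{m+1}}=\varpi_{a+2m+2}$ dies automatically: either the boundary position is virtual ($a=0$ or $a+2m+2=n+1$) and the convention $\varpi_0=\varpi_{n+1}=0$ zeros the summand, or else $\alpha_a^{v_a}$ or $\alpha_{a+2m+2}^{v_{a+2m+2}}$ is present in the monomial and $\alpha_j\varpi_j=0$ applies. For the remaining summands, all $m+1$ chosen factors must lie among the $m$ interior indices $q_1,\dots,q_m$, so by pigeonhole some $\varpi_{q_j}$ with $1\le j\le m$ appears with multiplicity exactly $2$. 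Substituting the second identity into that $\varpi_{q_j}^2$ splits the summand into two pieces; the first produces $\alpha_{a+2j-1}\varpi_{a+2j-1}=0$ using the $k=j-1$ factor of $\prod_k\alpha_{a+2k+1}$, while the second produces $\alpha_{a+2j+1}\varpi_{a+2j+1}=0$ using the $k=j$ factor. Both pieces vanish, so the lemma follows.

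The main obstacle I anticipate is the combinatorial bookkeeping after the pigeonhole step rather than any essentially new algebraic idea. The pigeonhole count is tight—exactly $m+1$ factors are distributed over $m$ interior indices, with per-index multiplicity capped at $2$—so one must verify that the squared interior weight $\varpi_{q_j}$ is always flanked on both sides by $\alpha$'s already present in $\prod_k\alpha_{a+2k+1}$, which is exactly what forces both halves of the final split to vanish. The virtual-boundary convention in the statement of the lemma is precisely what allows the boundary cases $a=0$ and $a+2m+2=n+1$ to be absorbed into the same argument as the interior case, without a separate analysis.
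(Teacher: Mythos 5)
Your proof is correct, and it takes a genuinely different route from the paper. The paper argues by induction on the number of $2$s in $S$: the base case is the direct computation $\alpha_{j-1}\alpha_j^2\alpha_{j+1}=0$, and the inductive step uses the partial reduction $\alpha_{j-1}\alpha_j^2\alpha_{j+2}^2=-\tfrac12\,\alpha_{j-1}\alpha_j\alpha_{j+1}\alpha_{j+2}^2$ to strip off the leftmost $2$ and shift the left $\star$ inward, which also foreshadows the Reduction Formulas of Section~3.2. You instead expand every square at once via $\alpha_j^2=-\alpha_j(\varpi_{j-1}+\varpi_{j+1})$ and kill each resulting term by a case split: terms containing a boundary weight die by $\alpha_a\varpi_a=0$ (or by the convention $\varpi_0=\varpi_{n+1}=0$ when the boundary is virtual), and the remaining terms are forced by pigeonhole --- $m+1$ weight factors distributed over $m$ interior indices, each attainable at most twice --- to contain some $\varpi_{q_j}^2$, which your second identity splits into two pieces each annihilated by an adjacent $\alpha_{q_j\pm1}$ already present in the product. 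I checked the bookkeeping: the multiplicity bound of $2$, the fact that both flanking indices $q_j\pm1$ are two-positions of $S$ for every interior $j$, and the degenerate case $m=0$ (where Case B is vacuous) all work out. The trade-off is that the paper's induction is shorter and reuses later machinery, while your argument is non-inductive and makes the counting reason for vanishing (one more $\varpi$ than available interior slots) completely explicit.
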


\begin{proof}[Proof of~\autoref{lem:vanishing-conditions}]
An alternating sub-vector $S$ of $0$s and $2$s satisfying $N_2(S) > N_0(S)$ must necessarily begin and end with a 2. Thus, it must be of the form $(2, 0, 2, \dots, 0, 2)$. The statement is therefore equivalent to proving the algebraic relation
\[
\a_{j-1} \left( \prod_{l=0}^{k} \a_{j+2l}^2 \right) \a_{j+2k+1} = 0
\]
for some integer $k \ge 0$. We prove this by induction on $k$.

For the base case $k=0$, which corresponds to the shortest such sub-vector $S=(2)$, the relation reduces to $\a_{j-1}\a_j^2\a_{j+1}=0$. This holds for all valid indices $j$. Specifically, for $1 < j < n$:
\begin{align*}
\alpha_{j-1} \alpha_j^2 \alpha_{j+1}
&= (\alpha_{j-1} \alpha_j \alpha_{j+1}) \cdot \alpha_j \\
&= (\alpha_{j-1} \alpha_j \alpha_{j+1}) (-\w_{j-1} + 2\w_j - \w_{j+1}) \\
&= -\alpha_j \alpha_{j+1}(\alpha_{j-1}\w_{j-1}) + 2\alpha_{j-1} \alpha_{j+1}(\alpha_j\w_j) - \alpha_{j-1}\alpha_j(\alpha_{j+1}\w_{j+1}) \\
&= 0.
\end{align*}
The final equality holds because each term contains a factor of the form $\alpha_i \w_i$, which vanishes by the relation~\eqref{eq:quadratic-relations-prelim}. The boundary cases $j=1$ and $j=n$ follow from the extended definition of $\a_i$, ensuring the relation holds universally.

Assume the statement holds for $k-1~(k \ge 1)$. Let $M_k$ be the expression on the left-hand side for the case~$k$:
\[
M_k = \a_{j-1} \a_j^2 \a_{j+2}^2 \cdots \a_{j+2k}^2 \a_{j+2k+1}.
\]
We apply the expansion $\a_j = -\w_{j-1} + 2\w_j - \w_{j+1}$ to one of the $\a_j$ factors in the initial part of the expression, $\a_{j-1} \a_j^2\a_{j+2}^2$. It follows that
\begin{align*}
\a_{j-1} \a_j^2\a_{j+2}^2
&=\a_{j-1}\a_j(-\w_{j-1} + 2\w_j - \w_{j+1})\a_{j+2}^2\\
&=-\a_{j-1}\a_j\w_{j+1}\a_{j+2}^2\\
&=-\frac{1}{2}\a_{j-1}\a_j(\a_{j+1}+\w_j+\w_{j+2})\a_{j+2}^2\\
&=-\frac{1}{2}\a_{j-1}\a_j\a_{j+1}\a_{j+2}^2.
\end{align*}
Substituting this result into the expression for $M_k$, we find that $M_k$ is proportional to
\[
(\a_{j-1}\a_j\a_{j+1}\a_{j+2}^2)(\a_{j+4}^2 \cdots \a_{j+2k}^2 \a_{j+2k+1}).
\]
The factor $\a_{j+1}\a_{j+2}^2 \cdots \a_{j+2k}^2 \a_{j+2k+1}$ corresponds to the case $k-1$ of our relation. By the inductive hypothesis, this factor vanishes. Consequently, $M_k$ vanishes. This completes the induction.
\end{proof}

\begin{proof}[Proof of~\autoref{prop:fundamental-decomposition}]
Let $v = (v_1, \dots, v_n)$ be the exponent vector of a non-vanishing monomial. First, we establish a necessary relation between the number of $0$s and $2$s. The total degree of the monomial is $n$, so $\sum v_i = N_1 + 2N_2 = n$. The total number of components is also $n$, so $N_0 + N_1 + N_2 = n$. Equating these two expressions for $n$, we have $N_1 + 2N_2 = N_0 + N_1 + N_2$, which simplifies to
\begin{equation}
\label{eq:two-equal-zero}
N_0 = N_2.
\end{equation}

To prove the proposition, we decompose the exponent vector $v$ into sub-vectors consisting only of $1$s and maximal alternating sub-vectors of $0$s and $2$s. We call these sub-vectors $1$-sequence and $02$-sequence respectively. We will show that for a non-vanishing monomial, any $02$-sequence $S$ in its exponent vector $v$ must satisfy the condition $N_2(S) = N_0(S)$.
Assume, for the sake of contradiction, that a non-vanishing monomial has an exponent vector~$v$ containing a $02$-sequence $S$ such that $N_2(S) > N_0(S)$. Since $S$ is a maximal alternating sub-vector, this inequality implies that $S$ must be of the form $S=(2,0,2,0,\dots,2)$. By the maximality of $S$, its adjacent entries in the augmented exponent vector must be of type~$\star$. This creates a sub-vector of the form $(\star, S, \star)$ in the augmented exponent vector. By Lemma~\ref{lem:vanishing-conditions}, this implies the monomial vanishes, which is a contradiction. Therefore, no $02$-sequence $S$ with $N_2(S) > N_0(S)$ can exist in the exponent vector of a non-vanishing monomial. Combined with the global condition $N_2 = N_0$, this forces every $02$-sequence $S$ to satisfy $N_2(S) = N_0(S)$. Such a $02$-sequence must be of the form $(0, 2, \dots, 0, 2)$ or $(2, 0, \dots, 2, 0)$. The former can be decomposed into blocks of $(0, 2)$, while the latter can be decomposed into blocks of $(2, 0)$. This completes the proof of the proposition.
\end{proof}

\subsection{Reduction Formulas}
Any monomial can be transformed into a rational multiple of the square-free monomial $c_n = \a_1\cdots\a_n$ by applying the reduction formulas provided in this subsection. 

\begin{lemma}[Reduction Formula 1]
\label{lem:reduction-rule1}
Replacing a local sub-vector of the form $(\star, 2, 0, \star)$ or $(\star, 0, 2, \star)$ in the augmented exponent vector with $(\star, 1, 1, \star)$ amounts to multiplying the monomial by a factor of $-1/2$.
\end{lemma}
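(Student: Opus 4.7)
The plan is to prove this by direct algebraic manipulation, combining the two identities already recorded in the preliminaries: the Cartan-matrix relation $\alpha_i = -\varpi_{i-1} + 2\varpi_i - \varpi_{i+1}$ from \eqref{eq:Cartan-matrix-relation-prelim} (together with the convention $\varpi_0 = \varpi_{n+1} = 0$ induced by \eqref{eq:extended-varphi-alpha}) and the quadratic vanishing $\alpha_i \varpi_i = 0$ from \eqref{eq:quadratic-relations-prelim}. The conceptual point is that the flanking $\star$ entries are present precisely so that the ``stray'' $\varpi_{j-1}$ and $\varpi_{j+2}$ contributions that appear during the reduction can be annihilated via $\alpha_{j-1}\varpi_{j-1} = 0$ and $\alpha_{j+2}\varpi_{j+2} = 0$.

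Concretely, for a sub-vector of the form $(\star, 2, 0, \star)$ at positions $(j-1,j,j+1,j+2)$, the relevant local factor of the monomial is $\alpha_{j-1}^{a}\alpha_j^2\alpha_{j+2}^{b}$ with $a,b \in \{1,2\}$. I would first split one factor of $\alpha_j$ via the Cartan relation, writing
\[
\alpha_j^2 = \alpha_j\bigl(-\varpi_{j-1} + 2\varpi_j - \varpi_{j+1}\bigr).
\]
The middle summand dies by $\alpha_j\varpi_j = 0$, and the first summand dies after multiplication by $\alpha_{j-1}^{a}$ since $a \ge 1$. What survives is $-\alpha_{j-1}^{a}\alpha_j\varpi_{j+1}\alpha_{j+2}^{b}$. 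Next, I would solve the Cartan relation at index $j+1$ for $\varpi_{j+1}$, namely $\varpi_{j+1} = \frac{1}{2}(\alpha_{j+1} + \varpi_j + \varpi_{j+2})$. Substituting produces three pieces: the $\varpi_j$ piece is killed by the adjacent $\alpha_j$, and the $\varpi_{j+2}$ piece is killed by the flanking $\alpha_{j+2}^{b}$ since $b \ge 1$. The only survivor is $-\frac{1}{2}\alpha_{j-1}^{a}\alpha_j\alpha_{j+1}\alpha_{j+2}^{b}$, which is exactly the local monomial with pattern $(\star,1,1,\star)$ multiplied by $-1/2$. The case $(\star, 0, 2, \star)$ is entirely symmetric: one expands $\alpha_{j+1}^2$ via the Cartan relation at index $j+1$, then rewrites the surviving $\varpi_j$ via the Cartan relation at index $j$, using $a \ge 1$ and $b \ge 1$ on opposite sides.

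The argument is essentially mechanical, so I do not expect any serious obstacle. The one spot requiring care is the boundary of the augmented exponent vector, where one of the flanking $\star$s is a virtual entry ($v_0$ or $v_{n+1}$). There, the would-be factor $\alpha_{j-1}$ or $\alpha_{j+2}$ is not literally present, but under the extended convention from \eqref{eq:extended-varphi-alpha} the $\varpi$ that it was supposed to kill is already zero ($\varpi_0 = \varpi_{n+1} = 0$), so the same line of computation still delivers the factor $-1/2$. Beyond verifying this boundary bookkeeping, the main check is just that the hypothesis $a,b \ge 1$ built into the definition of $\star$ is invoked on the correct side at each step.
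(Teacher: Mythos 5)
Your proposal is correct and follows essentially the same route as the paper's proof: expand one factor of $\alpha_j$ via the Cartan relation, kill two of the three terms using $\alpha_m\varpi_m=0$ together with the flanking factors, and then substitute $\varpi_{j+1}=\tfrac{1}{2}(\varpi_j+\alpha_{j+1}+\varpi_{j+2})$ to produce the factor $-1/2$. Your explicit tracking of the flanking exponents $a,b\in\{1,2\}$ and of the boundary cases via $\varpi_0=\varpi_{n+1}=0$ is a slightly more careful write-up of the same argument.
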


\begin{proof}
It suffices to prove the corresponding algebraic relation, for instance,
\[
\a_{i-1} \a_i^2 \a_{i+2} = -\frac{1}{2} \a_{i-1} \a_i \a_{i+1} \a_{i+2}.
\]
The calculation relies on the relation $\a_i = -\w_{i-1} + 2\w_i - \w_{i+1}$ and the vanishing of products of the form $\a_j\w_j=0$, as established in \eqref{eq:quadratic-relations-prelim}. A direct calculation then shows that
\begin{align*}
\a_{i-1} \a_i^2 \a_{i+2}
&= \a_{i-1} \a_i (-\w_{i-1} + 2\w_i - \w_{i+1}) \a_{i+2} \\
&= -(\a_{i-1}\a_i\a_{i+2})\w_{i+1} \\
&= -\frac{1}{2}\a_{i-1}\a_i\a_{i+1}\a_{i+2}.
\end{align*}
The last equality follows from the relation $\w_{i+1}=\tfrac{1}{2}(\w_i+\a_{i+1}+\w_{i+2})$. The symmetric case is proved similarly.
\end{proof}

\begin{lemma}[Reduction Formula 2]
\label{lem:reduction-rule2}
Replacing a local sub-vector of the form $(\star, 2, 0, 0, 2, \star)$ in the augmented exponent vector with $(\star, 1, 1, 1, 1, \star)$ amounts to multiplying the monomial by a factor of $1/3$.
\end{lemma}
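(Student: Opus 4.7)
The plan is to reduce Reduction Formula 2 to the single algebraic identity
\[
\a_{i-1}\a_i^2\a_{i+3}^2\a_{i+4} \;=\; \tfrac{1}{3}\,\a_{i-1}\a_i\a_{i+1}\a_{i+2}\a_{i+3}\a_{i+4},
\]
since any additional factors encoded by the $\star$-entries at positions $i-1$ and $i+4$ can be appended to both sides without affecting the argument. My first step mirrors the opening move from the proof of \autoref{lem:reduction-rule1}: expand each squared factor via $\a_j = -\w_{j-1} + 2\w_j - \w_{j+1}$. The relation $\a_j\w_j = 0$ from \eqref{eq:quadratic-relations-prelim} kills the middle term of $\a_j^2$, while the outer factors $\a_{i-1}$ and $\a_{i+4}$ annihilate the $\w_{i-1}$-term in $\a_i^2$ and the $\w_{i+4}$-term in $\a_{i+3}^2$, respectively. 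The left-hand side collapses to
\[
\a_{i-1}\a_i^2\a_{i+3}^2\a_{i+4} \;=\; \a_{i-1}\a_i\,\w_{i+1}\w_{i+2}\,\a_{i+3}\a_{i+4},
\]
so the problem reduces to evaluating the central product $\w_{i+1}\w_{i+2}$ inside this expression.

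Next, I would apply $\w_j = \tfrac{1}{2}(\w_{j-1}+\a_j+\w_{j+1})$ \emph{simultaneously} to both $\w_{i+1}$ and $\w_{i+2}$, yielding
\[
\w_{i+1}\w_{i+2} \;=\; \tfrac{1}{4}(\w_i+\a_{i+1}+\w_{i+2})(\w_{i+1}+\a_{i+2}+\w_{i+3}).
\]
Once inserted between $\a_i$ and $\a_{i+3}$, the four vanishing relations $\a_i\w_i = \a_{i+1}\w_{i+1} = \a_{i+2}\w_{i+2} = \a_{i+3}\w_{i+3} = 0$ annihilate seven of the nine terms of the expansion, leaving only $\a_{i+1}\a_{i+2}$ and, crucially, $\w_{i+1}\w_{i+2}$ itself.

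This reappearance of $\w_{i+1}\w_{i+2}$ on the right-hand side is the main obstacle and the source of the unusual factor $\tfrac{1}{3}$. The expansion produces the self-referential identity
\[
\a_i\w_{i+1}\w_{i+2}\a_{i+3} \;=\; \tfrac{1}{4}\a_i\a_{i+1}\a_{i+2}\a_{i+3} + \tfrac{1}{4}\a_i\w_{i+1}\w_{i+2}\a_{i+3},
\]
which I would solve linearly for $\a_i\w_{i+1}\w_{i+2}\a_{i+3}$ to obtain the coefficient $\tfrac{1}{3}$; substituting back into the displayed reduction of the left-hand side completes the proof. Boundary cases where $i-1 \le 0$ or $i+4 \ge n+1$ are absorbed uniformly by the extended definition of $\a_j$ in \eqref{eq:extended-varphi-alpha}, which guarantees that the same annihilation relations continue to hold formally at the edges.
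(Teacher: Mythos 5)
Your proposal is correct and follows essentially the same route as the paper: expand the squared factors via $\a_j=-\w_{j-1}+2\w_j-\w_{j+1}$ and $\a_j\w_j=0$ to reduce the left-hand side to $\a_{i-1}\a_i\a_{i+3}\a_{i+4}\cdot\w_{i+1}\w_{i+2}$, substitute $\w_j=\tfrac12(\w_{j-1}+\a_j+\w_{j+1})$ into both weight factors, observe that only $\a_{i+1}\a_{i+2}$ and $\w_{i+1}\w_{i+2}$ survive, and solve the resulting self-referential equation for the factor $\tfrac13$. The accounting of the nine terms and the handling of the boundary cases via the extended definition of $\a_i$ both match the paper's argument.
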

\begin{proof}
The proof relies on showing the algebraic relation
\[
\a_{i-1} \a_i^2 \a_{i+3}^2\a_{i+4}
= \frac{1}{3} \a_{i-1} \a_i \a_{i+1}\a_{i+2}\a_{i+3}\a_{i+4}.
\]
We first expand the squared factors using the relation $\a_j = -\w_{j-1} + 2\w_j - \w_{j+1}$ and the property $\a_j\w_j=0$. This yields
\begin{align*}
\a_{i-1} \a_i^2 \a_{i+3}^2\a_{i+4}
&= \a_{i-1}\a_i(-\w_{i-1}+2\w_i-\w_{i+1})\a_{i+3}(-\w_{i+2}+2\w_{i+3}-\w_{i+4})\a_{i+4} \\
&= \a_{i-1}\a_i(-\w_{i+1})\a_{i+3}(-\w_{i+2})\a_{i+4} \\
&= \a_{i-1}\a_i\a_{i+3}\a_{i+4} \cdot \w_{i+1}\w_{i+2}.
\end{align*}
Here, we substitute the expressions for $\w_{i+1}$ and $\w_{i+2}$ to obtain:
\begin{align*}
\a_{i-1} \a_i^2 \a_{i+3}^2\a_{i+4}
&= \a_{i-1}\a_i\a_{i+3}\a_{i+4} \cdot \frac{1}{4}(\w_i+\a_{i+1}+\w_{i+2})(\w_{i+1}+\a_{i+2}+\w_{i+3}) \\
&= \frac{1}{4} \a_{i-1}\a_i\a_{i+3}\a_{i+4} (\a_{i+1}+\w_{i+2})(\w_{i+1}+\a_{i+2}) \\
&= \frac{1}{4} \a_{i-1}\a_i\a_{i+3}\a_{i+4} (\a_{i+1}\a_{i+2} + \a_{i+1}\w_{i+1} + \w_{i+2}\a_{i+2} + \w_{i+1}\w_{i+2}) \\
&= \frac{1}{4} \a_{i-1}\a_i\a_{i+3}\a_{i+4} (\a_{i+1}\a_{i+2} + \w_{i+1}\w_{i+2}).
\end{align*}
In the second line, the terms $\w_i$ and $\w_{i+3}$ vanish due to multiplication by $\a_i$ and $\a_{i+3}$, respectively. In the final line, the cross terms vanish because they contain factors of the form $\a_j\w_j=0$ (specifically, $\a_{i+1}\w_{i+1}$ and $\a_{i+2}\w_{i+2}$).

Using the relation derived in the first step, this becomes
\[
\a_{i-1} \a_i^2 \a_{i+3}^2\a_{i+4} = \frac{1}{4}\a_{i-1}\a_i\a_{i+1}\a_{i+2}\a_{i+3}\a_{i+4} + \frac{1}{4}\a_{i-1} \a_i^2 \a_{i+3}^2\a_{i+4}.
\]
Rearranging this equation yields the desired result.
\end{proof}

\subsection{Criterion for Non-Vanishing Monomials}

In subsection~3.1, we established that the exponent vector of any non-vanishing monomial must be composed of fundamental blocks $(1)$, $(2,0)$, and $(0,2)$. Here, we establish the converse, thereby providing a complete criterion for non-vanishing monomials. The key idea is that any monomial whose exponent vector has such a structure can be reduced to a non-zero multiple of the top-degree monomial $c_n = \alpha_1 \cdots \alpha_n$ by \autoref{lem:reduction-rule1} and \autoref{lem:reduction-rule2}.

\begin{proposition}[Criterion for Non-Vanishing Monomials]
\label{prop:non-vanishing-criterion}
A monomial is non-vanishing if and only if its exponent vector can be decomposed into blocks of type $(1)$, $(2,0)$, and $(0,2)$.
\end{proposition}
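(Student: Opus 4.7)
The $(\Rightarrow)$ direction is \autoref{prop:fundamental-decomposition}, so the content is the converse. The plan is to show that any monomial whose exponent vector $v$ decomposes into blocks of type $(1)$, $(2,0)$, $(0,2)$ is a nonzero rational multiple of $c_n=\alpha_1\cdots\alpha_n$, and is therefore non-vanishing. The mechanism is to apply \autoref{lem:reduction-rule1} and \autoref{lem:reduction-rule2} repeatedly; since each such reduction multiplies the monomial by a nonzero rational scalar ($-\tfrac{1}{2}$ and $\tfrac{1}{3}$ respectively), the cumulative coefficient stays nonzero.

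I would proceed by induction on the number $k$ of non-$(1)$ blocks in the decomposition. The base case $k=0$ gives $v=(1,\ldots,1)$, so $\alpha^v=c_n$. For the inductive step, the goal is to locate at least one applicable reduction, apply it, and obtain a new exponent vector $v'$ with strictly fewer non-$(1)$ blocks; $v'$ inherits a valid block decomposition since the reduction only replaces a non-$(1)$ block (or an adjacent pair of them) with $(1)$-blocks. Invoking the inductive hypothesis on $v'$ then yields the result for $v$.

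The real obstacle is the combinatorial claim that some reduction is always available when $k\geq 1$. I would restrict attention to a single maximal run of non-$(1)$ blocks $B_1B_2\cdots B_m$, which is flanked on both sides by $\star$-entries (either $1$'s from adjacent $(1)$-blocks or the virtual boundary entries $v_0,v_{n+1}$). Setting $s_i=+$ if $B_i=(2,0)$ and $s_i=-$ if $B_i=(0,2)$, a direct inspection of neighboring entries shows that \autoref{lem:reduction-rule1} applies at $B_i$ iff ($s_{i-1}=-$ or $i=1$) and ($s_{i+1}=+$ or $i=m$), while \autoref{lem:reduction-rule2} applies at a pair $B_iB_{i+1}=(+,-)$ iff the analogous conditions hold for $s_{i-1}$ and $s_{i+2}$.

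The heart of the proof is therefore a sub-lemma stating that at least one of these two formulas always applies. I expect to argue this by contradiction: assuming \autoref{lem:reduction-rule1} fails at every $B_i$, the failure conditions at $i=1$ and $i=m$ force $s_2=-$ and $s_{m-1}=+$, and propagating the constraints inward (alternately fixing even-indexed $s_i$ to $-$ and odd-indexed $s_i$ to $+$) forces the unique alternating pattern $s=(+,-,+,-,\ldots,+,-)$ with $m$ even. But in this pattern the leading pair $B_1B_2=(+,-)$ satisfies the left-boundary condition ($i=1$) and has $s_3=+$, so \autoref{lem:reduction-rule2} applies. This resolves the obstacle and closes the induction.
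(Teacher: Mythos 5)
Your proposal is correct, and the $(\Leftarrow)$ direction is organized genuinely differently from the paper's proof. The paper gives a global, constructive reduction schedule: first merge every adjacent pair $(2,0)(0,2)$ into a single $(2,0,0,2)$ block; after this merge every surviving $(2,0)$ block automatically has a $\star$ on its right, and the leftmost one also has a $\star$ on its left, so the $(2,0)$ blocks are eliminated left to right by \autoref{lem:reduction-rule1}, the $(0,2)$ blocks symmetrically right to left, and finally the merged blocks (now flanked by $1$s) are handled by \autoref{lem:reduction-rule2}. You instead induct on the number of non-$(1)$ blocks and prove an existence sub-lemma (``some reduction always applies'') by contradiction inside a maximal run, encoding blocks as signs. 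Your characterization of when each reduction applies is correct (the left entry of $B_i$ is $\star$ exactly when $i=1$ or $B_{i-1}=(0,2)$, and dually on the right), and the contradiction argument does close: total failure of \autoref{lem:reduction-rule1} forces $s_2=-$ and $s_{m-1}=+$, and since each failure condition is a disjunction the propagation must be run from \emph{both} ends --- failures at odd indices $3,5,\dots$ force the even-indexed signs to $-$, failures at $m-2,m-4,\dots$ force the odd-indexed signs to $+$, and the parity clash rules out odd $m$ --- leaving only the alternating pattern on which \autoref{lem:reduction-rule2} fires at $B_1B_2$. That two-sided propagation and the odd-$m$ case are the only spots you gloss over, and they do work out; they are exactly the case analysis that the paper's ``merge first, then sweep'' schedule is designed to avoid. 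Your version is less constructive but more robust (no ordering of the reductions needs to be designed), while the paper's is shorter and makes the eventual block bookkeeping of Section~3.4 more transparent.
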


\begin{proof}
$(\Longrightarrow)$ This direction is a direct consequence of Proposition~\ref{prop:fundamental-decomposition}.

$(\Longleftarrow)$ Conversely, assume the exponent vector $v$ of a monomial can be decomposed into blocks of type $(1)$, $(2,0)$, and $(0,2)$. We will show that this vector can be fully reduced to the form $(1,1,\dots,1)$ by applying the reduction formulas from \autoref{lem:reduction-rule1} and \autoref{lem:reduction-rule2}. This implies the monomial is a non-zero scalar multiple of the non-vanishing term $c_n = \alpha_1 \cdots \alpha_n$, and thus is non-vanishing itself.

First, we identify any adjacent pairs of blocks of the form $(2,0)$ followed immediately by $(0,2)$, and merge them into a single $(2,0,0,2)$ block. After this operation, any remaining $(2,0)$ block must have a $\star$ type neighbor to its right. We then reduce all these remaining $(2,0)$ blocks sequentially from left to right. The leftmost $(2,0)$ block is guaranteed to have a $\star$ type neighbor on its left, allowing for its reduction to $(1,1)$ by Lemma~\ref{lem:reduction-rule1}. This process is repeated until all $(2,0)$ blocks are eliminated. A symmetric argument applies to all remaining $(0,2)$ blocks, which are reduced sequentially from right to left.

At this stage, the exponent vector is composed solely of $(1)$ blocks and the merged $(2,0,0,2)$ blocks. The neighbors of any $(2,0,0,2)$ block must be $\star$ types, as all original $(2,0)$ and $(0,2)$ blocks have been converted to $(1,1)$. Therefore, we can apply Lemma~\ref{lem:reduction-rule2} to reduce each $(2,0,0,2)$ block to $(1,1,1,1)$.

Since this systematic procedure reduces the exponent vector to $(1,1,\dots,1)$, and each reduction step introduces a non-zero scalar coefficient, the original monomial must be a non-zero multiple of $c_n$. This completes the proof.
\end{proof}

\begin{example}
Let us consider two examples of monomials appearing in the expansion of~$c_4c_6$.
\begin{enumerate}[(i)]
    \item Consider the monomial $\a_1^2\a_3\a_4^2\a_7^2\a_9^2\a_{10}$, with the exponent vector 
    $$
    v=(2, 0, 1, 2, 0, 0, 2, 0, 2, 1).
    $$
    First, we use \autoref{prop:non-vanishing-criterion} to check if this monomial is non-vanishing. To be non-vanishing, its vector must be decomposable into blocks of type (1), (2,0), and (0,2). For the vector $v$, we can make the following decomposition:
    \[
    (2,0), (1), (2,0), (0,2), (0,2), (1)
    \]
    Since this is a valid decomposition, the monomial is non-vanishing.

    Next, we show how to reduce this vector to $(1,1,\dots,1)$. The key idea is to group adjacent $(2,0)$ and $(0,2)$ blocks together. For this vector, we group the central $(2,0)$ and the first $(0,2)$ to form a single $(2,0,0,2)$ block. So, for the reduction process, we view the arrangement like this:
    \[
    (2,0), (1), (2,0,0,2), (0,2), (1)
    \]
    From this arrangement, we first reduce the outer $(2,0)$ and $(0,2)$ blocks, followed by the inner $(2,0,0,2)$ block, according to the procedure described in the proof of \autoref{prop:non-vanishing-criterion}. Applying reduction $1$ and $2$ yields:
\[
\a_1^2\a_3\a_4^2\a_7^2\a_9^2\a_{10}=\bra{-\frac{1}{2}}^2\bra{\frac{1}{3}}\a_1\a_2\cdots\a_{10}.
\]

    \item Conversely, consider the monomial $\a_2 \a_3^2 \a_5^2 \a_7^2 \a_8 \a_{10}^2$, with the exponent vector $(0, 1, 2, 0, 2, 0, 2, 1, 0, 2)$.
    
    This monomial must vanish. Its exponent vector cannot be decomposed into the allowed blocks (1), (2,0), and (0,2), so it fails the condition in \autoref{prop:non-vanishing-criterion}.
    
    More directly, the reason it fails is that its augmented exponent vector contains the sub-vector $(1, 2, 0, 2, 0, 2, 1)$. This sub-vector has the form $(\star, S, \star)$ with $S=(2, 0, 2, 0, 2)$. As established in \autoref{lem:vanishing-conditions}, this structure forces the monomial to be zero.
\end{enumerate}
\end{example}

\subsection{Enumeration via Block Classification}

As established in \autoref{prop:non-vanishing-criterion}, a non-vanishing monomial is characterized by a unique partition of its exponent vector into blocks of type $(1)$, $(2,0)$, and $(0,2)$. To enumerate monomials that reduce to the same multiple of $c_n$, we now introduce the following five types of blocks for classification:

\begin{description}
\item[$\Lbox$] The block $(2,0)$. This reduces with a factor of $-\tfrac{1}{2}$.
\item[$\Rbox$] The block $(0,2)$. This also reduces with a factor of $-\tfrac{1}{2}$.
\item[$\Mbox$] To ensure a unique partition, any sequence consisting of an $\Lbox$ immediately followed by an $\Rbox$ must be treated as a single indecomposable block. We call this the $\Mbox$ block. Applying Reduction Formula~2 shows it reduces with a factor of $\tfrac{1}{3}$.
\item[$\kbox$] An entry $(1)$ originating from a factor in $c_k$.
\item[$\nkbox$] An entry $(1)$ originating from a factor in $c_{n-k}$.
\end{description}

An exponent vector of any non-vanishing monomial corresponds to a unique arrangement of these five blocks. The problem of counting non-vanishing monomials in the expansion of $c_kc_{n-k}$ is transformed into counting these arrangements. So, we address this by defining a Raw-count and a Net-count.

Let $i$ be the number of squared factors in a monomial appearing in the expansion of~ $c_kc_{n-k}$, where $0\leq i\leq k$. Let $m, \ell,$ and $r$ be the numbers of $\Mbox$, $\Lbox$, and $\Rbox$ blocks, respectively. These numbers are related by the equation $2m+\ell+r=i$. We begin with the Raw-count, which enumerates all arrangements without structural restrictions.
\begin{definition}
We define $\Raw{m}{\ell}{r}(k,i)$ to be the total number of block arrangements for given values of $k$ and $i$.
\end{definition}

\begin{lemma}\label{lem:Raw_binom}
Let $\Raw{m}{\ell}{r}(k,i)$ be the number defined above. Then
\begin{flalign*}
\Raw{m}{\ell}{r}(k,i)
&= \binom{n-3m-\ell-r}{m}
\binom{n-4m-\ell-r}{\ell}
\binom{n-4m-2\ell-r}{r}
\binom{n-2i}{k-i}\\
&= \binom{m+\ell+r}{m,\ell,r}\binom{n-3m-\ell-r}{m+\ell+r}\binom{n-2i}{k-i}.
\end{flalign*}
Here, $\binom{m+\ell+r}{m,\ell,r}$ is the multinomial coefficient. Furthermore,
\begin{align}
\Raw{m}{\ell}{r}(k,i)=\Raw{m}{\ell}{r}(i,i)\times\binom{n-2i}{k-i}.
\label{eq:Rawki-Rawii}
\end{align}
\end{lemma}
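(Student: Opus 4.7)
The plan is to reduce $\Raw{m}{\ell}{r}(k,i)$ to a single multinomial coefficient, and then to re-expand that multinomial in two different ways to match the two stated product forms. Identity~\eqref{eq:Rawki-Rawii} will follow as an immediate corollary.

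First I would pin down the multiset of blocks being arranged. Each $\Mbox$ contributes two squared factors, and each of $\Lbox$ and $\Rbox$ contributes one, so tracking the total squared-factor count forces the identity $2m+\ell+r=i$. For the $(1)$-entries I would use the fact that a monomial in the expansion of $c_kc_{n-k}$ corresponds to choosing subsets $A,B\subset[n]$ with $|A|=k$, $|B|=n-k$: the squared positions are indexed by $A\cap B$ (of size $i$), the $\kbox$ positions by $A\setminus B$ (of size $k-i$), and the $\nkbox$ positions by $B\setminus A$ (of size $n-k-i$). The multiset of blocks is therefore determined: $m$ copies of $\Mbox$, $\ell$ of $\Lbox$, $r$ of $\Rbox$, $k-i$ of $\kbox$, and $n-k-i$ of $\nkbox$. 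A sanity check gives total length $4m+2\ell+2r+(n-2i)=n$ and total block count $N:=m+\ell+r+(n-2i)=n-3m-\ell-r$.

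A block arrangement is then simply a linear ordering of these $N$ blocks, so
\[
\Raw{m}{\ell}{r}(k,i)=\binom{N}{m,\ \ell,\ r,\ k-i,\ n-k-i}.
\]
The first product form arises by choosing positions sequentially for each block type: $\Mbox$, then $\Lbox$, then $\Rbox$, then $\kbox$; the residual counts at each stage telescope to the upper indices $n-3m-\ell-r$, $n-4m-\ell-r$, $n-4m-2\ell-r$, and $n-2i$. The second product form arises by first grouping $\Mbox,\Lbox,\Rbox$ together: choose their $m+\ell+r$ joint positions among the $N$ slots, distribute these positions among the three types via $\binom{m+\ell+r}{m,\ell,r}$, and then assign $k-i$ of the remaining $n-2i$ slots to $\kbox$. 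Identity~\eqref{eq:Rawki-Rawii} is then immediate, since $\binom{n-2i}{k-i}$ is the only $k$-dependent factor and equals $1$ when $k=i$.

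I do not anticipate a genuine obstacle. The only real conceptual step is the bookkeeping in the second paragraph, which disentangles the squared-factor contributions of the structural blocks from the single-factor contributions of $c_k$ versus $c_{n-k}$; everything after that is standard multinomial manipulation.
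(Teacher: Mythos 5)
Your proposal is correct and follows essentially the same route as the paper: both arguments treat an arrangement as a linear ordering of $n-3m-\ell-r$ blocks and count by choosing positions sequentially for $\Mbox$, $\Lbox$, $\Rbox$, then $\kbox$, with the telescoping residual counts and the substitution $2m+\ell+r=i$ giving the stated factors. Your extra step of first writing the count as the single multinomial $\binom{n-3m-\ell-r}{m,\ell,r,k-i,n-k-i}$ is a harmless repackaging of the same idea.
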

\begin{proof}
We choose the positions for the blocks in the order of $\Mbox$,~$\Lbox$,~$\Rbox$, and $\kbox$. The $m$ positions for the $\Mbox$ blocks can be chosen from $n-3m-\ell-r$ available positions. The $\ell$ positions for the $\Lbox$ blocks can be chosen from $n-4m-\ell-r$ positions. The $r$~positions for the $\Rbox$ blocks can be chosen from $n-4m-2\ell-r$ positions. Finally, the $k-i$ positions for the $\kbox$ blocks can be chosen from the remaining $n-4m-2\ell-2r$ positions. Recalling that $2m+\ell+r=i$, we have $n-4m-2\ell-2r = n-2(2m+\ell+r) = n-2i$. This provides the first equality. The second equality is clear from the definition of the multinomial coefficient.
\end{proof}

The Criterion for non-vanishing monomials \autoref{prop:non-vanishing-criterion} establishes that the decomposition of an exponent vector of non-vanishing monomials is unique. This uniqueness requires that any sequence that could be interpreted as an $\Lbox$ block immediately followed by an $\Rbox$ block must be identified as a single $\Mbox$ block. Consequently, a valid block decomposition corresponding to a non-vanishing monomial cannot contain an adjacent $\Lbox\Rbox$ pair. This distinction motivates defining a Net-count for arrangements that satisfy this rule, as opposed to a Raw-count of all possible arrangements.

\begin{definition}
Let $\Net{m}{\ell}{r}(k,i)$ denote the total number of block arrangements in which no $\Lbox$ block is immediately followed by an $\Rbox$ block.
\end{definition}

\begin{lemma}\label{lem:Net-Raw1}
For integers $i, k$ satisfying $0\leq i\leq k$ and non-negative integers $m, \ell, r$ satisfying $2m+\ell+r=i$, the following relation holds:
\begin{align}
\Raw{m}{\ell}{r}(k,i)=\sum_{p=0}^{\minval{\ell}{r}}\binom{m+p}{p}\Net{m+p}{\ell-p}{r-p}(k,i).
\end{align}
\end{lemma}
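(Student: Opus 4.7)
The plan is to prove the identity by a bijective or double-counting argument that compares raw arrangements (in which adjacent $\Lbox\Rbox$ pairs are permitted) with net arrangements (in which such pairs are forbidden) by means of a merge/split operation on adjacent $\Lbox\Rbox$ pairs.

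First, I would define a merging map on the set counted by $\Raw{m}{\ell}{r}(k,i)$. Given a raw arrangement, scan from left to right and replace every occurrence of an $\Lbox$ immediately followed by an $\Rbox$ with a single $\Mbox$. Let $p$ denote the number of such pairs encountered; then each merge removes one $\Lbox$ and one $\Rbox$ and adds one $\Mbox$, so the resulting configuration has parameters $(m+p,\ell-p,r-p)$. I need to check two things here: that the merging is unambiguous (a given $\Lbox$ has at most one immediate successor, so the pairs to merge are pairwise disjoint), and that the result contains no remaining $\Lbox\Rbox$ adjacency, since merging replaces the offending $\Lbox$ and $\Rbox$ by an $\Mbox$, which is neither. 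This shows the image is counted by $\Net{m+p}{\ell-p}{r-p}(k,i)$.

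Next I would describe the inverse direction. Starting from a net arrangement with parameters $(m+p,\ell-p,r-p)$, choose any subset of $p$ of the $m+p$ $\Mbox$ blocks and split each chosen $\Mbox$ back into the pair $\Lbox\Rbox$. The number of such choices is $\binom{m+p}{p}$. I must verify that the resulting configuration is a valid raw arrangement with parameters $(m,\ell,r)$ whose merging recovers the original net arrangement with the original choice of $p$ pairs. The only $\Lbox\Rbox$ adjacencies in the new arrangement are the ones created internally by the $p$ splits: the net arrangement had no such adjacency to begin with, and splitting an $\Mbox$ places the new $\Rbox$ immediately after the new $\Lbox$ but adjacent on the outside only to the same neighbors the original $\Mbox$ had, neither of which can create a new $\Lbox\Rbox$ pattern (a preceding block becomes followed by an $\Lbox$, a succeeding block is preceded by an $\Rbox$, so in either case the only $\Lbox\Rbox$ adjacencies are the split pairs themselves).

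Combining these two directions, raw arrangements with parameters $(m,\ell,r)$ and exactly $p$ adjacent $\Lbox\Rbox$ pairs are in bijection with pairs consisting of a net arrangement with parameters $(m+p,\ell-p,r-p)$ together with a $p$-element subset of its $m+p$ $\Mbox$ blocks. Partitioning $\Raw{m}{\ell}{r}(k,i)$ according to the value of $p$, which ranges over $0\le p\le\minval{\ell}{r}$ so that $\ell-p\ge 0$ and $r-p\ge 0$, yields the claimed identity.

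The only genuinely subtle point, and the main obstacle I would need to handle carefully, is verifying that the merge and split operations commute with the adjacency condition in the precise sense above, namely that merging creates no new $\Lbox\Rbox$ adjacencies and that splitting a designated subset of $\Mbox$ blocks in a net arrangement produces exactly those adjacencies and no others. Once this bookkeeping is stated cleanly, the identity follows by summing the bijection over $p$.
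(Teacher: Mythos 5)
Your proposal is correct and takes essentially the same approach as the paper: both partition the raw arrangements according to the number $p$ of adjacent $\Lbox\Rbox$ pairs and identify each such arrangement with a net arrangement of parameters $(m+p,\ell-p,r-p)$ together with a choice of which $p$ of the $m+p$ $\Mbox$ blocks arose from merging, yielding the factor $\binom{m+p}{p}$. Your write-up is in fact more careful than the paper's, which asserts the correspondence without checking that the merge/split operations are unambiguous and create or destroy no unintended adjacencies.
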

\begin{proof}
Let $p$ be the number of adjacent $\Lbox$ and $\Rbox$ pairs that form $\Mbox$ blocks. Then $0\leq p\leq \minval{\ell}{r}$. If $p$ such pairs form $\Mbox$ blocks, we have a total of $m+p$ $\Mbox$ blocks. The number of ways to choose which $p$ of these $(m+p)$ $\Mbox$ blocks originated from the $\Lbox$ $\Rbox$ pairs is $\binom{m+p}{p}$. Summing over all possible values of $p$, we obtain the formula in the lemma.
\end{proof}

\begin{proposition}\label{prop:Net-to-Raw2}
Let $\Raw{m}{\ell}{r}(k,i)$ and $\Net{m}{\ell}{r}(k,i)$ be the numbers defined above. Then the following relation holds:
\begin{align}
\Net{m}{\ell}{r}(k,i)=\sum_{p=0}^{\minval{\ell}{r}}(-1)^p\binom{m+p}{p}\Raw{m+p}{\ell-p}{r-p}(k,i).
\label{eq:Net-expansion-of-Raws}
\end{align}
Furthermore, combining this result with \eqref{eq:Rawki-Rawii}, we have
\begin{align}
\Net{m}{\ell}{r}(k,i)
= \binom{n-2i}{k-i}\times\Net{m}{\ell}{r}(i,i).
\label{eq:Netki-Netii}
\end{align}
\end{proposition}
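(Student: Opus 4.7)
The plan is to invert the triangular linear identity established in \autoref{lem:Net-Raw1} by a standard binomial-inversion argument, which directly yields \eqref{eq:Net-expansion-of-Raws}, and then derive \eqref{eq:Netki-Netii} by pulling out the common factor $\binom{n-2i}{k-i}$ using \eqref{eq:Rawki-Rawii}. No further combinatorial input is required beyond these two ingredients.

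For \eqref{eq:Net-expansion-of-Raws}, I would substitute the Raw expansion from \autoref{lem:Net-Raw1} into the right-hand side of the desired identity. This produces a double sum over $p, q \ge 0$ whose terms are multiples of $\Net{m+p+q}{\ell-p-q}{r-p-q}(k,i)$ with coefficients $(-1)^p\binom{m+p}{p}\binom{m+p+q}{q}$. The key algebraic step is the binomial identity
\[
\binom{m+p}{p}\binom{m+p+q}{q} = \binom{m+s}{s}\binom{s}{p}, \qquad s:=p+q,
\]
which permits reindexing the double sum by $s$. For each fixed $s$, the coefficient of $\Net{m+s}{\ell-s}{r-s}(k,i)$ collapses to $\binom{m+s}{s}\sum_{p=0}^{s}(-1)^p\binom{s}{p}=\binom{m+s}{s}\,\delta_{s,0}$, so only the $s=0$ term survives and recovers $\Net{m}{\ell}{r}(k,i)$.

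For \eqref{eq:Netki-Netii}, once \eqref{eq:Net-expansion-of-Raws} is in hand, each Raw term on the right-hand side carries the common factor $\binom{n-2i}{k-i}$ by \eqref{eq:Rawki-Rawii}, independent of the shifts $m+p$, $\ell-p$, $r-p$. Pulling that factor out of the sum and applying \eqref{eq:Net-expansion-of-Raws} again at $k=i$ yields the claimed factorization.

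The computation is routine; the only delicate point is the index bookkeeping when exchanging the order of summation, specifically verifying that the joint range $0 \le p \le \minval{\ell}{r}$, $0 \le q \le \minval{\ell-p}{r-p}$ becomes $0 \le s \le \minval{\ell}{r}$ with $0 \le p \le s$. An alternative route would be a direct inclusion-exclusion on the forbidden adjacent $\Lbox\Rbox$ pairs, but the binomial-inversion argument above is cleaner and mirrors the structure of \autoref{lem:Net-Raw1}.
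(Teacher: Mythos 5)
Your proposal is correct, and it reaches \eqref{eq:Net-expansion-of-Raws} by a genuinely different (and arguably cleaner) route than the paper. The paper proves the inversion by induction on $\min(\ell,r)$: it rearranges \autoref{lem:Net-Raw1} to isolate $\Net{m}{\ell}{r}(k,i)$, applies the inductive hypothesis to the lower-order Net terms, and extracts the coefficient of each $\Raw{m+j}{\ell-j}{r-j}(k,i)$. You instead substitute the expansion of \autoref{lem:Net-Raw1} directly into the right-hand side of the claimed identity and verify that the double sum telescopes to $\Net{m}{\ell}{r}(k,i)$ — a one-shot verification with no induction. The computational core is identical in both arguments: your identity $\binom{m+p}{p}\binom{m+p+q}{q}=\binom{m+s}{s}\binom{s}{p}$ with $s=p+q$ is the same trinomial-revision identity the paper uses in the form $\binom{m+p}{p}\binom{m+j}{j-p}=\binom{j}{p}\binom{m+j}{j}$, and both conclude with $\sum_{p=0}^{s}(-1)^p\binom{s}{p}=\delta_{s,0}$. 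Your index bookkeeping is sound: since $2(m+p)+(\ell-p)+(r-p)=i$ is preserved under the shift, \autoref{lem:Net-Raw1} applies to each $\Raw{m+p}{\ell-p}{r-p}(k,i)$, and the joint range does reindex to $0\le s\le\min(\ell,r)$ with the full range $0\le p\le s$ for each $s$, so the alternating sum is complete. The derivation of \eqref{eq:Netki-Netii} is also correct for the reason you note — the factor $\binom{n-2i}{k-i}$ from \eqref{eq:Rawki-Rawii} is the same for every term because the constraint $2m+\ell+r=i$ is shift-invariant. What your approach buys is brevity and the absence of an induction scaffold; what the paper's buys is an explicit recursive derivation of the inverse coefficients rather than a verification of a guessed formula. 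Both are complete proofs.
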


\begin{proof}
By the symmetry between $\Lbox$ and $\Rbox$, it is sufficient to prove the case where $\ell \leq r$. We proceed by induction on $\ell = \minval{\ell}{r}$.

\paragraph{Case $\ell=0$:}
Since there are no $\Lbox$ blocks, no adjacent \Lbox\ and \Rbox\ pairs can form an \Mbox. Therefore, by definition,
\[
\Net{m}{0}{r}(k,i) = \Raw{m}{0}{r}(k,i),
\]
and the statement holds for $\ell=0$.

\paragraph{Case $\ell=1$:} From \autoref{lem:Net-Raw1}, the following recurrence relation holds:
\[
\Net{m}{1}{r}(k,i) = \Raw{m}{1}{r}(k,i) - \binom{m+1}{1}\Net{m+1}{0}{r-1}(k,i).
\]
Noting that $\Net{m+1}{0}{r-1}(k,i) = \Raw{m+1}{0}{r-1}(k,i)$, we obtain
\[
\Net{m}{1}{r}(k,i) = \Raw{m}{1}{r}(k,i) - \binom{m+1}{1}\Raw{m+1}{0}{r-1}(k,i).
\]
Thus, the statement holds for $\ell=1$.

\paragraph{Case $\ell$:}
Assume that the proposition holds for all non-negative integers $q < \ell$. From \autoref{lem:Net-Raw1},
\[
\Net{m}{\ell}{r}(k,i)=\Raw{m}{\ell}{r}(k,i)-\sum_{p=1}^{\ell}\binom{m+p}{p}\Net{m+p}{\ell-p}{r-p}(k,i).
\]
For $p \geq 1$, we have $\ell-p < \ell$. Thus, by the induction hypothesis, the term $\Net{m+p}{\ell-p}{r-p}(k,i)$ on the right-hand side can be written as:
\[
\Net{m+p}{\ell-p}{r-p}(k,i)=\sum_{q=0}^{\ell-p}(-1)^q\binom{m+p+q}{q}\Raw{m+p+q}{\ell-p-q}{r-p-q}(k,i).
\]
Therefore,
\[
\Net{m}{\ell}{r}(k,i)
=\Raw{m}{\ell}{r}(k,i)-\sum_{p=1}^{\ell}\binom{m+p}{p}\left(\sum_{q=0}^{\ell-p}(-1)^q\binom{m+p+q}{q}\Raw{m+p+q}{\ell-p-q}{r-p-q}(k,i)\right).
\]
We now find the coefficient of a specific term $\Raw{m+j}{\ell-j}{r-j}$ (for $1\leq j\leq \ell$) in this sum. This term arises from pairs $(p,q)$ such that $p+q=j$ with $p \ge 1, q \ge 0$. Its coefficient is the sum:
\begin{align}
-\sum_{p=1}^{j}\binom{m+p}{p} \cdot (-1)^{j-p}\binom{m+p+(j-p)}{j-p} = \sum_{p=1}^{j} (-1)^{j-p+1}\binom{m+p}{p}\binom{m+j}{j-p}.
\label{eq:Raw-mj-lj-rj}
\end{align}
Using the identity $\binom{m+p}{p}\binom{m+j}{j-p}=\binom{j}{p}\binom{m+j}{j}$ for an integer $p$ with $1\leq p\leq j$, the coefficient becomes
\begin{align*}
\sum_{p=1}^{j} (-1)^{j-p+1}\binom{m+p}{p}\binom{m+j}{j-p}
&= \sum_{p=1}^{j}(-1)^{j-p+1}\binom{j}{p}\binom{m+j}{j}\\
&= \binom{m+j}{j} \sum_{p=1}^{j}(-1)^{j-p+1}\binom{j}{p} \\
&= \binom{m+j}{j} (-1)^{j+1} \sum_{p=1}^{j}(-1)^{-p}\binom{j}{p}.
\end{align*}
For $j \ge 1$, the binomial theorem implies $\sum_{p=0}^{j}(-1)^p\binom{j}{p} = (1-1)^j = 0$. From this, we have
\[
\sum_{p=1}^{j}(-1)^{p}\binom{j}{p} = \left(\sum_{p=0}^{j}(-1)^p\binom{j}{p}\right) - \binom{j}{0} = 0 - 1 = -1.
\]
Therefore, the coefficient of $\Raw{m+j}{\ell-j}{r-j}(k,i)$ is
\[
\binom{m+j}{j} (-1)^{j+1}(-1) = (-1)^{j+2}\binom{m+j}{j} = (-1)^j\binom{m+j}{j}.
\]
Since this holds for each $j=1,\dots,\ell$, we have
\begin{align*}
\Net{m}{\ell}{r}(k,i) &= \Raw{m}{\ell}{r}(k,i) + \sum_{j=1}^{\ell}(-1)^j\binom{m+j}{j}\Raw{m+j}{\ell-j}{r-j}(k,i) \\
&= \sum_{j=0}^{\ell}(-1)^j\binom{m+j}{j}\Raw{m+j}{\ell-j}{r-j}(k,i),
\end{align*}
which shows that the proposition holds for $\ell$.
\end{proof}

\section{Coefficient Formula}
\label{sec:coefficient_formula}
In Section 3, we introduced a method for enumerating non-zero monomials, grouping those that reduce to the same multiple of the square-free monomial $c_n$. Specifically, this enumeration used a correspondence between the exponent vector of a monomial with $i$ squared factors and an arrangement of five types of blocks. The coefficient resulting from the reduction of a monomial is determined by the types of blocks in its arrangement, specifically:
\begin{itemize}
    \item Each $\Lbox$ or $\Rbox$ block contributes a factor of $-1/2$ (\autoref{lem:reduction-rule1}).
    \item Each $\Mbox$ block contributes a factor of $1/3$ (\autoref{lem:reduction-rule2}).
\end{itemize}
In this section, we combine these combinatorial results and reduction factors to determine the contribution of monomials with $i$ squared terms to the total coefficient $\mu_k(n)$ in the main theorem.

\begin{definition}
We define $\Xval{k}{i}$ as the total coefficient from monomials with $i$ squared factors in the expansion of $c_kc_{n-k}$, after reduction to $c_n$.
\end{definition}

\begin{lemma}
    
\label{lem:Xki_formula_sec4}
Let $0\leq i\leq k\leq n$. The coefficient $\Xval{k}{i}$ is given by the following formula:
\begin{align}
\Xval{k}{i}=\sum_{2m+\ell+r=i}\bra{-\frac{1}{2}}^{\ell+r}\bra{\frac{1}{3}}^m\Net{m}{\ell}{r}(k,i),
\end{align}
where $m, \ell,$ and $r$ are the numbers of $\Mbox, \Lbox,$ and $\Rbox$ blocks, respectively, in the block arrangement corresponding to a monomial's exponent vector $v$.
\end{lemma}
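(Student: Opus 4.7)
The plan is to trace each ordered pair of subsets $(S,T)$ appearing in the expansion of $c_k c_{n-k}$ to a unique labeled block arrangement, and then multiply by the scalar produced when the corresponding monomial is reduced to $c_n$.

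First, using $c_k = e_k(\alpha_1,\dots,\alpha_n)$ and $c_{n-k} = e_{n-k}(\alpha_1,\dots,\alpha_n)$, I would expand
\begin{equation*}
c_k c_{n-k} = \sum_{\substack{|S|=k,\,|T|=n-k \\ S,T\subseteq[n]}} \Bigl(\prod_{s\in S}\a_s\Bigr)\Bigl(\prod_{t\in T}\a_t\Bigr).
\end{equation*}
The exponent vector $v$ of each such term satisfies $v_j=2$ iff $j\in S\cap T$, $v_j=1$ iff $j\in S\triangle T$, and $v_j=0$ otherwise, so terms with exactly $i$ squared factors correspond to pairs $(S,T)$ with $|S\cap T|=i$. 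On the combinatorial side, labeling each degree-one position by $\kbox$ if it originated in $S\setminus T$ or by $\nkbox$ if in $T\setminus S$ exactly recovers the pair $(S,T)$ from the monomial; this labeling is what produces the factor $\binom{n-2i}{k-i}$ appearing in \autoref{lem:Raw_binom}.

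Next I would restrict to non-vanishing monomials. By \autoref{prop:non-vanishing-criterion}, these are exactly the exponent vectors admitting a decomposition into blocks of type $(1)$, $(2,0)$, and $(0,2)$, and this decomposition is unique provided every adjacent $\Lbox$\,$\Rbox$ pair is grouped into a single $\Mbox$ block. Consequently, labeled arrangements containing $m$ blocks of type $\Mbox$, $\ell$ of type $\Lbox$, $r$ of type $\Rbox$, together with $k-i$ positions marked $\kbox$ and $n-k-i$ positions marked $\nkbox$, are in bijection with the non-vanishing pairs $(S,T)$ whose exponent vector has the prescribed block profile and satisfies $|S\cap T|=i$. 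By definition the total number of such arrangements is $\Net{m}{\ell}{r}(k,i)$.

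Finally I would apply the reduction rules: each $\Lbox$ or $\Rbox$ block contributes a factor $-\tfrac{1}{2}$ by \autoref{lem:reduction-rule1}, each $\Mbox$ block contributes $\tfrac{1}{3}$ by \autoref{lem:reduction-rule2}, and the entries $\kbox$, $\nkbox$ contribute $1$. Because the rules act locally on each block and the block decomposition is unique, every monomial whose arrangement has block counts $(m,\ell,r)$ reduces to $(-1/2)^{\ell+r}(1/3)^m c_n$, independently of the specific arrangement. Pulling these scalars outside the sum and grouping by $(m,\ell,r)$ subject to the constraint $2m+\ell+r=i$ yields the stated formula. The subtle step that requires care is the bijection in the previous paragraph: one must confirm that labeling the $1$-entries by $\kbox$ versus $\nkbox$ corresponds exactly to the data of which degree-one factors in the pair $(S,T)$ came from $S$ versus $T$, so that the enumeration by $\Net{m}{\ell}{r}(k,i)$ matches the multiplicity of each non-vanishing monomial in the expansion without over- or undercounting.
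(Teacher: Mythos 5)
Your proposal is correct and follows essentially the same route as the paper's proof: identify each non-vanishing term of the expansion with $i$ squared factors with a unique labeled block arrangement counted by $\Net{m}{\ell}{r}(k,i)$, attach the reduction factors $-\tfrac12$ per $\Lbox$/$\Rbox$ and $\tfrac13$ per $\Mbox$, and sum over $2m+\ell+r=i$. Your version is in fact more detailed than the paper's, which takes the correspondence between terms of $c_kc_{n-k}$ (your pairs $(S,T)$) and labeled arrangements for granted.
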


\begin{proof}
If an exponent vector $v$ decomposes into a block decomposition consisting of $m$ $\Mbox$ blocks, $\ell$ $\Lbox$ blocks, and $r$ $\Rbox$ blocks, its reduction yields a coefficient of $\bra{-1/2}^{\ell+r}\bra{1/3}^m$. The term $\Net{m}{\ell}{r}(k,i)$ counts the number of such valid block arrangements, each corresponding to a unique non-vanishing monomial. Therefore, the total coefficient $\Xval{k}{i}$ is obtained by summing the products of these coefficients and their corresponding counts over all tuples $\{m,\ell,r\}$ satisfying $2m+\ell+r=i$.
\end{proof}

\begin{corollary}
\label{cor:Xki-simplified-sec4}
The coefficient $\Xval{k}{i}$ can be expressed in terms of $\Xval{i}{i}$ as follows:
\begin{align}
\Xval{k}{i} = \binom{n-2i}{k-i} \Xval{i}{i}.
\label{eq:Xki-Xii-sec4}
\end{align}
\end{corollary}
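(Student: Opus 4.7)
The plan is to observe that this corollary is an essentially immediate consequence of \autoref{lem:Xki_formula_sec4} combined with the key structural identity \eqref{eq:Netki-Netii} established in \autoref{prop:Net-to-Raw2}. No new combinatorial argument is required; the work is entirely algebraic bookkeeping.

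I would start from the expression
\[
\Xval{k}{i}=\sum_{2m+\ell+r=i}\bra{-\frac{1}{2}}^{\ell+r}\bra{\frac{1}{3}}^m\Net{m}{\ell}{r}(k,i)
\]
provided by \autoref{lem:Xki_formula_sec4}. The central observation is that the binomial factor $\binom{n-2i}{k-i}$ appearing in \eqref{eq:Netki-Netii} depends only on the pair $(k,i)$ and is independent of the summation indices $m,\ell,r$. Substituting $\Net{m}{\ell}{r}(k,i)=\binom{n-2i}{k-i}\Net{m}{\ell}{r}(i,i)$ into the sum therefore allows this factor to be pulled outside.

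After factoring it out, the remaining inner sum
\[
\sum_{2m+\ell+r=i}\bra{-\frac{1}{2}}^{\ell+r}\bra{\frac{1}{3}}^m\Net{m}{\ell}{r}(i,i)
\]
is exactly the formula of \autoref{lem:Xki_formula_sec4} specialized to $k=i$, and hence equals $\Xval{i}{i}$. This yields the claimed identity $\Xval{k}{i}=\binom{n-2i}{k-i}\Xval{i}{i}$.

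There is no real obstacle here: both pieces (the closed-form counting identity in \autoref{prop:Net-to-Raw2} and the reduction-factor bookkeeping in \autoref{lem:Xki_formula_sec4}) are already in hand, and the only step is to interchange the multiplicative constant with the finite sum. The conceptual content of the corollary, that the dependence on $k$ in $\Xval{k}{i}$ is captured entirely by the single factor $\binom{n-2i}{k-i}$ counting the positions of the $k-i$ singleton blocks of type $\kbox$ among the $n-2i$ slots not occupied by squared factors, has already been absorbed into the proof of equation~\eqref{eq:Netki-Netii} via \autoref{lem:Raw_binom}.
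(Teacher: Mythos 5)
Your proposal is correct and follows essentially the same route as the paper: substitute the identity $\Net{m}{\ell}{r}(k,i)=\binom{n-2i}{k-i}\Net{m}{\ell}{r}(i,i)$ from \eqref{eq:Netki-Netii} into the formula of \autoref{lem:Xki_formula_sec4}, pull the $(m,\ell,r)$-independent binomial factor out of the sum, and recognize the remaining sum as $\Xval{i}{i}$. Nothing is missing.
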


\begin{proof}
Since \eqref{eq:Netki-Netii} gives 
\[
\Net{m}{\ell}{r}(k,i)=\binom{n-2i}{k-i}\Net{m}{\ell}{r}(i,i),
\]
we substitute this into the formula from \autoref{lem:Xki_formula_sec4}:
\begin{align*}
\Xval{k}{i}
&= \sum_{2m+\ell+r=i}\bra{-\frac{1}{2}}^{\ell+r}\bra{\frac{1}{3}}^m\cbra{\binom{n-2i}{k-i}\Net{m}{\ell}{r}(i,i)} \\
&= \binom{n-2i}{k-i} \cbra{\sum_{2m+\ell+r=i}\bra{-\frac{1}{2}}^{\ell+r}\bra{\frac{1}{3}}^m\Net{m}{\ell}{r}(i,i)} \\
&= \binom{n-2i}{k-i} \Xval{i}{i}.
\end{align*}
This proves the corollary.
\end{proof}

This corollary shows that the problem of computing $\Xval{k}{i}$ reduces to finding an explicit formula for $\Xval{i}{i}$. It turns out that $\Xval{i}{i}$ has the following representation.
\begin{proposition}\label{prop:formula-Xii}
For an integer $i$ satisfying $0\leq i\leq k$, the following holds:
\begin{align}
\Xval{i}{i}=(-1)^i\sum_{j=0}^{\lfloor i/2 \rfloor}\bra{\frac{1}{12}}^j\binom{i-j}{j}\binom{n-i-j}{i-j}.
\end{align}
\end{proposition}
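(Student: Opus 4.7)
The plan is to substitute the Net-to-Raw expansion from \autoref{prop:Net-to-Raw2} into the formula for $\Xval{i}{i}$ from \autoref{lem:Xki_formula_sec4}, reindex so that the alternating $p$-sum decouples, and then apply the binomial theorem twice. Since $\binom{n-2i}{k-i}=1$ when $k=i$, combining the two results yields
\begin{equation*}
\Xval{i}{i} = \sum_{\substack{m,\ell,r\ge 0 \\ 2m+\ell+r=i}}\ \sum_{p=0}^{\minval{\ell}{r}} (-1)^p \binom{m+p}{p}\bra{-\tfrac{1}{2}}^{\ell+r}\bra{\tfrac{1}{3}}^m \Raw{m+p}{\ell-p}{r-p}(i,i).
\end{equation*}

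Next I would change variables to $M = m+p$, $L = \ell-p$, $R = r-p$. The constraint $2m+\ell+r=i$ becomes $2M+L+R=i$, and one checks that the substitution is a bijection onto the set of tuples with $M,L,R\ge 0$, $2M+L+R=i$, and $0\le p\le M$ (the conditions $\ell,r\ge 0$ translate into $L,R\ge 0$, while $m\ge 0$ gives $p\le M$). The factors depending on $p$ collapse to $\binom{M}{p}(-3/4)^p$, while the remaining weight is $(-1/2)^{L+R}(1/3)^M$. The binomial theorem then gives $\sum_{p=0}^{M}\binom{M}{p}(-3/4)^p=(1/4)^M$, and $(1/3)^M(1/4)^M=(1/12)^M$, so
\begin{equation*}
\Xval{i}{i} = \sum_{\substack{M,L,R\ge 0 \\ 2M+L+R=i}}\bra{-\tfrac{1}{2}}^{L+R}\bra{\tfrac{1}{12}}^M \Raw{M}{L}{R}(i,i).
\end{equation*}

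Finally, inserting the closed form $\Raw{M}{L}{R}(i,i) = \binom{i-M}{M,L,R}\binom{n-i-M}{i-M}$ from \autoref{lem:Raw_binom} (using $M+L+R=i-M$ and $n-3M-L-R=n-i-M$) together with the splitting $\binom{i-M}{M,L,R}=\binom{i-M}{M}\binom{i-2M}{L}$, the inner sum over $L$ at fixed $M$ evaluates as
\begin{equation*}
\bra{-\tfrac{1}{2}}^{i-2M}\sum_{L=0}^{i-2M}\binom{i-2M}{L} = (-1)^{i-2M} = (-1)^i.
\end{equation*}
Relabelling $M$ as $j$, which now ranges over $0\le j\le\floor{i/2}$, produces the claimed formula.

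The only delicate point is the change of variables: one must verify that the substitution $(m,\ell,r,p)\mapsto(M,L,R,p)$ gives precisely the index set $\{M,L,R\ge 0,\ 2M+L+R=i,\ 0\le p\le M\}$, since this is exactly what is required to complete the inner $p$-sum to a full binomial expansion. Once that bijection is in hand, the two binomial summations above finish the argument.
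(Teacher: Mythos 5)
Your proof is correct, and every step checks out: the change of variables $(m,\ell,r,p)\mapsto(M,L,R,p)=(m+p,\ell-p,r-p,p)$ is indeed a bijection onto $\{M,L,R\ge 0,\ 2M+L+R=i,\ 0\le p\le M\}$ (the constraints $p\le\ell$ and $p\le r$ become $p\le L+p$ and $p\le R+p$, hence vacuous, while $m\ge 0$ becomes $p\le M$), the $p$-dependent weight is $(-1)^p\binom{M}{p}\bigl(\tfrac14\bigr)^p 3^p=\binom{M}{p}\bigl(-\tfrac34\bigr)^p$ as you say, and the final $L$-sum gives $(-1)^{i-2M}=(-1)^i$. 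Your route is the same overall strategy as the paper's (expand $\mathrm{Net}$ in terms of $\mathrm{Raw}$ via \autoref{prop:Net-to-Raw2}, reorganize, apply the binomial theorem twice), but organized in the opposite order: the paper fixes the number of merged blocks, first sums the $\mathrm{Raw}$ counts over $\ell+r$ fixed (which requires the auxiliary identities \autoref{lem:triCoef} and \autoref{lem:sum_of_Raw}), splits into even and odd $i$, and only then performs the alternating inclusion--exclusion sum; you instead absorb the inclusion--exclusion index $p$ into the block counts first, so that the alternating sum completes to a full binomial expansion $(1-\tfrac34)^M$, and the remaining sum over $(L,R)$ with weight $\bigl(-\tfrac12\bigr)^{L+R}$ produces the sign $(-1)^i$ uniformly. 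The payoff is a shorter argument with no parity case split and no need for the two auxiliary lemmas; the paper's version, in exchange, isolates the unweighted count $\sum_{\ell+r=L}\Raw{M}{\ell}{r}(i,i)$ as a reusable combinatorial identity.
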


First, we prepare a lemma that will be used in the proof.
\begin{lemma}\label{lem:triCoef}
For non-negative integers $L, M, N$ satisfying $L+M=N$, the following identity holds:
\begin{flalign*}
\sum_{r=0}^L\binom{N}{M,L-r,r}=2^L\binom{N}{M}.
\end{flalign*}
\end{lemma}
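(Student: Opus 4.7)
The plan is to factor the trinomial coefficient into a product of a fixed binomial and a variable binomial, and then sum the variable factor using the binomial theorem.

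More concretely, I would first rewrite the trinomial coefficient by pulling out the factor involving $M$. Using the identity
\[
\binom{N}{M,L-r,r}=\frac{N!}{M!\,(L-r)!\,r!}=\binom{N}{M}\binom{N-M}{r},
\]
which holds because $\binom{N-M}{L-r,r}=\binom{N-M}{r}$, and since the hypothesis $L+M=N$ gives $N-M=L$, this simplifies to $\binom{N}{M}\binom{L}{r}$.

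Substituting this factorization into the sum yields
\[
\sum_{r=0}^{L}\binom{N}{M,L-r,r}=\binom{N}{M}\sum_{r=0}^{L}\binom{L}{r}=2^L\binom{N}{M},
\]
where the last equality is the standard binomial identity $\sum_{r=0}^{L}\binom{L}{r}=(1+1)^L=2^L$.

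There is no substantial obstacle here; the only step that requires a moment of care is the factorization of the multinomial, which relies crucially on the hypothesis $L+M=N$ so that the middle and right slots of $\binom{N}{M,L-r,r}$ together account for exactly $L$ items. Once that is in place the identity is immediate from the binomial theorem.
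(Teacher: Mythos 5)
Your proof is correct, but it takes a different route from the paper. You argue algebraically: using $L+M=N$ you factor the trinomial coefficient as $\binom{N}{M,L-r,r}=\binom{N}{M}\binom{L}{r}$, pull the fixed factor $\binom{N}{M}$ out of the sum, and finish with $\sum_{r=0}^{L}\binom{L}{r}=2^L$. The paper instead gives a double-counting (bijective) proof: it counts colorings of $N$ distinct balls with $M$ red and the remaining $L$ each white or blue, once directly as $2^L\binom{N}{M}$ and once by conditioning on the number $r$ of blue balls, which yields the sum of trinomial coefficients. The two arguments are essentially two faces of the same identity; yours makes the dependence on the hypothesis $L+M=N$ explicit in the factorization step and reduces everything to the binomial theorem, while the paper's counting argument avoids factorial manipulations and makes the identity transparent at the level of set partitions. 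Either proof is complete and adequate for the role the lemma plays in \autoref{lem:sum_of_Raw}.
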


\begin{proof}[Proof of~\autoref{lem:triCoef}]
We consider the number of ways to color $N$ distinct balls, where $M$ are colored red, and the remaining $L$ are colored either white or blue. This can be counted in two ways.
\paragraph{Method 1:} First, we choose $M$ balls to be colored red, which can be done in $\binom{N}{M}$ ways. For each of the remaining $L$ balls, there are two color choices (white or blue). Thus, the total number of ways is $2^L\binom{N}{M}$.
\paragraph{Method 2:} We can sum over the number of blue balls, say $r$, where $0 \le r \le L$. For a fixed $r$, the number of ways to choose $M$ red balls, $L-r$ white balls, and $r$ blue balls is given by the multinomial coefficient $\binom{N}{M, L-r, r}$. Summing over all possible values of $r$ gives the total number of ways.
Since both methods count the same quantity, the equality holds.
\end{proof}

\begin{lemma}\label{lem:sum_of_Raw}
For non-negative integers $L, M$ and an index $i$ satisfying $2M+L=i$, the following identity holds:
\begin{align}
    \sum_{\ell+r=L} \Raw{M}{\ell}{r}(i,i) = 2^L \binom{L+M}{M}\binom{n-3M-L}{L+M}.
\end{align}
\end{lemma}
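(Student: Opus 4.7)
The plan is to reduce the sum directly to the identity in \autoref{lem:triCoef} using the closed form for $\Raw{M}{\ell}{r}(i,i)$ provided by \autoref{lem:Raw_binom}.

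First, I would specialize \autoref{lem:Raw_binom} to $k=i$. In that case the factor $\binom{n-2i}{k-i}$ collapses to $\binom{n-2i}{0}=1$, so
\[
\Raw{M}{\ell}{r}(i,i) = \binom{M+\ell+r}{M,\ell,r}\binom{n-3M-\ell-r}{M+\ell+r}.
\]
Next, since we sum over pairs $(\ell,r)$ with $\ell+r=L$ fixed, the second binomial $\binom{n-3M-\ell-r}{M+\ell+r} = \binom{n-3M-L}{M+L}$ is independent of how $L$ splits between $\ell$ and $r$, so it can be pulled out of the sum. The calculation becomes
\[
\sum_{\ell+r=L}\Raw{M}{\ell}{r}(i,i)
= \binom{n-3M-L}{M+L}\sum_{r=0}^{L}\binom{M+L}{M,L-r,r}.
\]

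Finally, I would apply \autoref{lem:triCoef} with $N=M+L$ to the remaining sum, obtaining $\sum_{r=0}^{L}\binom{M+L}{M,L-r,r} = 2^L\binom{M+L}{M}$. Substituting this back yields exactly the claimed formula.

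There is no real obstacle here: the lemma is essentially a bookkeeping step that combines the Raw-count formula with the combinatorial identity already proved in \autoref{lem:triCoef}. The only thing to check carefully is the specialization $k=i$, which makes the last binomial factor in \autoref{lem:Raw_binom} trivial and isolates exactly the multinomial sum handled by \autoref{lem:triCoef}.
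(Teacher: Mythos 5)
Your proposal is correct and follows essentially the same route as the paper: specialize \autoref{lem:Raw_binom} at $k=i$ (where the final binomial becomes $1$), pull out the common factor $\binom{n-3M-L}{L+M}$, and finish with \autoref{lem:triCoef} applied with $N=L+M$. No gaps.
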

\begin{proof}
Substituting $\ell=L-r$ (for $0\leq r \leq L$), the left-hand side becomes:
\begin{align*}
\sum_{\ell+r=L} \Raw{M}{\ell}{r}(i,i)
&= \sum_{r=0}^L \Raw{M}{L-r}{r}(i,i) \\
&= \sum_{r=0}^L \binom{L+M}{M,L-r,r}\binom{n-3M-L}{L+M} \quad (\text{by }\autoref{lem:Raw_binom}) \\
&= \binom{n-3M-L}{L+M} \sum_{r=0}^{L} \binom{L+M}{M,L-r,r} \\
&= 2^L\binom{L+M}{M} \binom{n-3M-L}{L+M}. \quad\text{(by \autoref{lem:triCoef} with $N=L+M$)}
\end{align*}
\end{proof}

\begin{proof}[Proof of \autoref{prop:formula-Xii}]
We prove the proposition by considering two cases based on the parity of $i$.
For simplicity, we write $\Net{m}{\ell}{r}$ for $\Net{m}{\ell}{r}(i,i)$ and $\Raw{m}{\ell}{r}$ for $\Raw{m}{\ell}{r}(i,i)$.

\paragraph{Case 1:}
Suppose $i = 2q$ is even. From the condition $2m+\ell+r=2q$, we have $0\leq m\leq q$. Thus, in the defining formula from \autoref{lem:Xki_formula_sec4}, we can rewrite the sum in terms of $j$ by setting $m=q-j$.
\[
\Xval{2q}{2q} = \sum_{j=0}^{q} \bra{\frac{1}{3}}^{q-j}\bra{-\frac{1}{2}}^{2j} \sum_{\ell+r=2j} \Net{q-j}{\ell}{r}
\]
Let us consider the inner sum $S_j := \sum_{\ell+r=2j} \Net{q-j}{\ell}{r}$. By \autoref{prop:Net-to-Raw2},
\begin{align*}
S_j &= \sum_{r=0}^{2j}\Net{q-j}{2j-r}{r}\\
&= \sum_{r=0}^{2j} \cbra{\sum_{p=0}^{\minval{2j-r}{r}}(-1)^p \binom{q-j+p}{p} \Raw{q-j+p}{2j-r-p}{r-p}}.
\end{align*}
We now interchange the order of summation. We have two conditions, $0\leq r\leq 2j$ and $0\leq p\leq\minval{2j-r}{r}$, which imply that $0\leq p\leq j$. For a fixed $p$, the range of $r$ becomes $p \le r \le 2j-p$. A change of variables in the inner sum then gives
\begin{align*}
S_j
=\sum_{p=0}^j(-1)^p\binom{q-j+p}{p}\sum_{r=0}^{2(j-p)}\Raw{q-j+p}{2(j-p)-r}{r}.
\end{align*}
Applying \autoref{lem:sum_of_Raw} with $M=q-(j-p)$ and $L=2(j-p)$, we get
\[
\sum_{r=0}^{2(j-p)}\Raw{q-j+p}{2(j-p)-r}{r}=2^{2(j-p)}\binom{q+(j-p)}{q-(j-p)}\binom{n-3q+(j-p)}{q+(j-p)}.
\]
Thus,
\begin{align*}
S_j
&=\sum_{p=0}^j(-1)^p\binom{q-(j-p)}{p}\cbra{2^{2(j-p)}\binom{q+(j-p)}{q-(j-p)}\binom{n-3q+(j-p)}{q+(j-p)}}\\
&=\sum_{p'=0}^{j}(-1)^{j-p'}\binom{q-p'}{j-p'}\cbra{2^{2p'}\binom{q+p'}{q-p'}\binom{n-3q+p'}{q+p'}}, \qquad (p'=j-p).
\end{align*}
From the above, we have
\begin{flalign*}
\Xval{2q}{2q}
&=\sum_{j=0}^q\bra{\frac{1}{3}}^{q-j}\bra{-\frac{1}{2}}^{2j}S_j\\
&=\sum_{j=0}^q\bra{\frac{1}{3}}^{q-j}\bra{-\frac{1}{2}}^{2j}\sum_{p'=0}^{j}(-1)^{j-p'}\binom{q-p'}{j-p'}\cbra{2^{2p'}\binom{q+p'}{q-p'}\binom{n-3q+p'}{q+p'}}.
\end{flalign*}
Now, we interchange the order of summation to sum over $p'$ first. The summation range $0 \le p' \le j \le q$ can be rewritten as $0 \le p' \le q$ and $p' \le j \le q$.
\begin{flalign*}
\Xval{2q}{2q}
&= \sum_{p'=0}^q \sum_{j=p'}^q \bra{\frac{1}{3}}^{q-j}\bra{\frac{1}{4}}^{j}(-1)^{j-p'}\binom{q-p'}{j-p'} 2^{2p'}\binom{q+p'}{q-p'}\binom{n-3q+p'}{q+p'}.
\end{flalign*}
Factoring out the terms that depend on $p'$ from the inner sum, we compute the sum over $j$.
\begin{flalign*}
\Xval{2q}{2q}
&= \sum_{p'=0}^q 2^{2p'}\binom{q+p'}{q-p'}\binom{n-3q+p'}{q+p'} \cbra{\sum_{j=p'}^q \bra{\frac{1}{3}}^{q-j}\bra{\frac{1}{4}}^{j}(-1)^{j-p'}\binom{q-p'}{j-p'}}.
\end{flalign*}
For the inner sum, let $s=j-p'$, so that $j=s+p'$ and $s$ ranges from $0$ to $q-p'$.
\begin{align*}
\sum_{s=0}^{q-p'} \bra{\frac{1}{3}}^{q-(s+p')}\bra{\frac{1}{4}}^{s+p'}(-1)^{s}\binom{q-p'}{s}
&= \bra{\frac{1}{3}}^{q-p'}\bra{\frac{1}{4}}^{p'}\sum_{s=0}^{q-p'}\binom{q-p'}{s}\bra{\frac{1}{3}}^{-s}\bra{\frac{1}{4}}^{s}(-1)^s \\
&= \bra{\frac{1}{3}}^{q-p'}\bra{\frac{1}{4}}^{p'}\sum_{s=0}^{q-p'}\binom{q-p'}{s}\bra{-\frac{3}{4}}^s \\
&= \bra{\frac{1}{3}}^{q-p'}\bra{\frac{1}{4}}^{p'}\bra{1-\frac{3}{4}}^{q-p'} \qquad\text{(by the binomial theorem)}\\
&= \bra{\frac{1}{3}}^{q-p'}\bra{\frac{1}{4}}^{p'}\bra{\frac{1}{4}}^{q-p'} = \bra{\frac{1}{12}}^{q-p'}\bra{\frac{1}{4}}^{p'}.
\end{align*}
Substituting this result back into the main equation,
\begin{align*}
\Xval{2q}{2q}
&= \sum_{p'=0}^q 2^{2p'}\binom{q+p'}{q-p'}\binom{n-3q+p'}{q+p'} \times \bra{\frac{1}{12}}^{q-p'}\bra{\frac{1}{4}}^{p'} \\
&= \sum_{p'=0}^q \bra{4^{p'}\cdot\bra{\frac{1}{4}}^{p'}} \bra{\frac{1}{12}}^{q-p'}\binom{q+p'}{q-p'}\binom{n-3q+p'}{q+p'} \\
&= \sum_{p'=0}^q\bra{\frac{1}{12}}^{q-p'}\binom{q+p'}{q-p'}\binom{n-3q+p'}{q+p'}.
\end{align*}
By letting $j'=q-p'$, we have $p'=q-j'$, and the terms become
\begin{align*}
\binom{q+p'}{q-p'} &= \binom{q+(q-j')}{q-(q-j')} = \binom{2q-j'}{j'} = \binom{i-j'}{j'}, \\
\binom{n-3q+p'}{q+p'} &= \binom{n-3q+(q-j')}{q+(q-j')} = \binom{n-2q-j'}{2q-j'} = \binom{n-i-j'}{i-j'}.
\end{align*}
Since $i=2q$, we have $\fl{i/2}=q$. Thus, we obtain the desired formula for the even case.
\begin{flalign*}
\Xval{2q}{2q}=\Xval{i}{i}
=\sum_{j'=0}^{\fl{i/2}}\bra{
\frac{1}{12}
}^{j'}\binom{i-j'}{j'}\binom{n-i-j'}{i-j'}.
\end{flalign*}

The odd case follows a similar argument, though we provide the full details to emphasize the necessary modifications.
\paragraph{Case 2:}
Suppose $i = 2q+1$ is odd.From the condition $2m+\ell+r=2q+1$, we have $0\leq m\leq q$. We set $m=q-j$ in the defining formula from \autoref{lem:Xki_formula_sec4} to rewrite the sum in terms of $j$.
\[
\Xval{2q+1}{2q+1} = \sum_{j=0}^{q} \bra{\frac{1}{3}}^{q-j}\bra{-\frac{1}{2}}^{2j+1} \sum_{\ell+r=2j+1} \Net{q-j}{\ell}{r}.
\]
Let $S'_j := \sum_{\ell+r=2j+1} \Net{q-j}{\ell}{r}$ be the inner sum. By \autoref{prop:Net-to-Raw2},
\begin{align*}
S'_j &= \sum_{r=0}^{2j+1}\Net{q-j}{2j+1-r}{r} \\
&= \sum_{r=0}^{2j+1} \cbra{\sum_{p=0}^{\minval{2j+1-r}{r}}(-1)^p \binom{q-j+p}{p} \Raw{q-j+p}{2j+1-r-p}{r-p}} \\
&= \sum_{p=0}^j(-1)^p\binom{q-j+p}{p}\sum_{r=p}^{2j+1-p}\Raw{q-j+p}{2j+1-r-p}{r-p}.
\end{align*}
For the inner sum, we apply the substitution $r'=r-p$. The range $p \le r \le 2j+1-p$ thus become $0 \le r' \le 2(j-p)+1$. This gives
\[
\sum_{r'=0}^{2(j-p)+1}\Raw{q-j+p}{2(j-p)+1-r'}{r'}.
\]
We apply \autoref{lem:sum_of_Raw} with $M=q-j+p$ and $L=2(j-p)+1$. Since $L+M=q+j-p+1$, this sum is equal to
\begin{align*}
&2^{2(j-p)+1}\binom{q+j-p+1}{q-j+p}\binom{n-3(q-j+p)-(2(j-p)+1)}{q+j-p+1}\\
=\ &2^{2(j-p)+1}\binom{q+(j-p)+1}{q-(j-p)}\binom{n-3q+(j-p)-1}{q+(j-p)+1}.
\end{align*}
Therefore, $S'_j$ is given by
\begin{align*}
S'_j
&= \sum_{p=0}^j(-1)^p\binom{q-(j-p)}{p} 2^{2(j-p)+1}\binom{q+(j-p)+1}{q-(j-p)}\binom{n-3q+(j-p)-1}{q+(j-p)+1}.
\end{align*}
Let $p'=j-p$, which means $p=j-p'$. Then,
\begin{align*}
S'_j
&= \sum_{p'=0}^{j}(-1)^{j-p'}\binom{q-p'}{j-p'} 2^{2p'+1}\binom{q+p'+1}{q-p'}\binom{n-3q+p'-1}{q+p'+1}.
\end{align*}
Substituting this into the expression for $\Xval{2q+1}{2q+1}$ and interchanging the order of summation, we get
\begin{align*}
\Xval{2q+1}{2q+1}
&= \sum_{j=0}^q \bra{\frac{1}{3}}^{q-j}\bra{-\frac{1}{2}}^{2j+1} \sum_{p'=0}^{j}(-1)^{j-p'}\binom{q-p'}{j-p'} 2^{2p'+1}\binom{q+p'+1}{q-p'}\binom{n-3q+p'-1}{q+p'+1} \\
&= \sum_{p'=0}^q (-1)^{-p'} 2^{2p'+1} \binom{q+p'+1}{q-p'}\binom{n-3q+p'-1}{q+p'+1} \\
& \qquad \qquad \times \bra{\sum_{j=p'}^q \bra{\frac{1}{3}}^{q-j}\bra{-\frac{1}{2}}^{2j+1} (-1)^j \binom{q-p'}{j-p'}}.
\end{align*}
Letting $s=j-p'$ in the inner sum over $j$,
\begin{align*}
&\sum_{s=0}^{q-p'} \bra{\frac{1}{3}}^{q-p'-s}\bra{-\frac{1}{2}}^{2(s+p')+1} (-1)^{s+p'} \binom{q-p'}{s} \\
=\ & \bra{\frac{1}{3}}^{q-p'} \bra{-\frac{1}{2}}^{2p'+1} (-1)^{p'} \sum_{s=0}^{q-p'} \binom{q-p'}{s} \bra{\frac{1}{3}}^{-s} \bra{-\frac{1}{2}}^{2s} (-1)^s \\
=\ & \bra{\frac{1}{3}}^{q-p'} \bra{-\frac{1}{2}}^{2p'+1} (-1)^{p'} \sum_{s=0}^{q-p'} \binom{q-p'}{s} \bra{- \frac{3}{4}}^s \\
=\ & \bra{\frac{1}{3}}^{q-p'} \bra{-\frac{1}{2}}^{2p'+1} (-1)^{p'} \bra{1-\frac{3}{4}}^{q-p'} = \bra{\frac{1}{12}}^{q-p'} \bra{-\frac{1}{2}}^{2p'+1} (-1)^{p'}.
\end{align*}
Substituting this back into the main expression,
\begin{align*}
\Xval{2q+1}{2q+1}
&= \sum_{p'=0}^q (-1)^{-p'} 2^{2p'+1} \binom{q+p'+1}{q-p'}\binom{n-3q+p'-1}{q+p'+1} \bra{\frac{1}{12}}^{q-p'} \bra{-\frac{1}{2}}^{2p'+1} (-1)^{p'} \\
&= \sum_{p'=0}^q \bra{2^{2p'+1} \cdot \bra{-\frac{1}{2}}^{2p'+1}} \bra{\frac{1}{12}}^{q-p'} \binom{q+p'+1}{q-p'}\binom{n-3q+p'-1}{q+p'+1} \\
&= -\sum_{p'=0}^q \bra{\frac{1}{12}}^{q-p'} \binom{q+p'+1}{q-p'}\binom{n-3q+p'-1}{q+p'+1}.
\end{align*}
Let $j'=q-p'$, so that $p'=q-j'$. Then
\begin{align*}
q+p'+1 &= q+(q-j')+1 = 2q+1-j' = i-j', \\
n-3q+p'-1 &= n-3q+(q-j')-1 = n-2q-j'-1 = n-(2q+1)-j' = n-i-j'.
\end{align*}
Since $i=2q+1$, we have $\fl{i/2}=q$. Thus, we obtain the desired formula for the odd case.
\begin{align*}
\Xval{2q+1}{2q+1}=\Xval{i}{i} &= -\sum_{j'=0}^{q} \bra{\frac{1}{12}}^{j'} \binom{i-j'}{j'} \binom{n-i-j'}{i-j'} \\
&= (-1)^i \sum_{j'=0}^{\fl{i/2}} \bra{\frac{1}{12}}^{j'} \binom{i-j'}{j'} \binom{n-i-j'}{i-j'}.
\end{align*}
\end{proof}

\section{Main Theorem}
In this final section, we determine the coefficient $\mu_k(n)$ in the product formula
\[ c_k c_{n-k} = \mu_k(n) c_n, \]
which is the main goal of this paper. Recall that we expanded the product $c_k c_{n-k}$ and classified the resulting monomials by the number of squared factors, $i$. In Section 4, we defined $\Xval{k}{i}$ as the sum of the coefficients contributed by all monomials with exactly $i$ squared factors. To obtain the total coefficient $\mu_k(n)$, we must therefore sum these contributions over all possible values of $i$, ranging from 0 to $k$. This leads to the relation:
\[
\mu_k(n)=\sum_{i=0}^k\Xval{k}{i}.
\]

\begin{theorem}
\label{thm:MainTheorem}
Let $k$ be an integer satisfying $0\leq k\leq n$. The coefficient $\muval{k}{n}$ in the relation $c_kc_{n-k}=~\muval{k}{n}c_n$ is given by
\begin{align}
\muval{k}{n}=\sum_{j=0}^{\floor{k/2}}\bra{\frac{1}{12}}^j\binom{k-j}{j}\binom{n-k-j}{j}.
\end{align}
\end{theorem}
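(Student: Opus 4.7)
The plan is to assemble the ingredients from Section~4 and carry out the final summation. We already have $\mu_k(n) = \sum_{i=0}^{k} X_{k,i}$ from the decomposition by the number $i$ of squared factors. Corollary 4.2 peels off the dependence on $k$ by writing $X_{k,i} = \binom{n-2i}{k-i}\,X_{i,i}$, and Proposition 4.3 expresses $X_{i,i}$ itself as an alternating sum weighted by powers of $1/12$. Substituting both into the definition of $\mu_k(n)$ yields a double sum in $i$ and $j$:
\[
\mu_k(n) = \sum_{i=0}^{k} (-1)^i \binom{n-2i}{k-i}\sum_{j=0}^{\lfloor i/2 \rfloor} \bra{\frac{1}{12}}^j \binom{i-j}{j}\binom{n-i-j}{i-j}.
\]

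The next step is to interchange the order of summation, moving $j$ outside. The constraint $j \le \lfloor i/2 \rfloor$ translates to $i \ge 2j$, and combined with $i \le k$ it restricts $j$ to $0 \le j \le \lfloor k/2 \rfloor$. This gives
\[
\mu_k(n) = \sum_{j=0}^{\lfloor k/2 \rfloor} \bra{\frac{1}{12}}^j \sum_{i=2j}^{k} (-1)^i \binom{n-2i}{k-i}\binom{i-j}{j}\binom{n-i-j}{i-j}.
\]
Matching this against the target formula term by term in the powers of $1/12$, the theorem reduces to proving, for each fixed $j$ in the allowed range, the binomial identity
\[
\sum_{i=2j}^{k} (-1)^i \binom{n-2i}{k-i}\binom{i-j}{j}\binom{n-i-j}{i-j} = \binom{k-j}{j}\binom{n-k-j}{j}.
\]

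I expect this identity to be the main obstacle. My approach is to first substitute $u = i - 2j$, which turns $(-1)^i$ into $(-1)^u$ and normalizes the summation range to $0 \le u \le k-2j$. The middle pair of binomials can then be rewritten using the elementary factorization
\[
\binom{u+j}{j}\binom{n-u-3j}{u+j} = \binom{n-u-3j}{j}\binom{n-u-4j}{u},
\]
after which the alternating sum over $u$ should collapse by a single application of the binomial theorem, in the spirit of the evaluation $\sum_s \binom{N}{s}\bra{-\tfrac{3}{4}}^s = \bra{\tfrac{1}{4}}^N$ that appears at the end of the proof of Proposition 4.3 and is responsible there for producing the $1/12$ factor. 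A useful sanity check is that both sides are manifestly symmetric under $k \leftrightarrow n-k$ (the left side via $\binom{n-2i}{k-i} = \binom{n-2i}{n-k-i}$), as required by $c_k c_{n-k} = c_{n-k}\,c_k$; if the direct collapse proves recalcitrant, induction on $k$ via Pascal's rule exploiting this symmetry offers a natural backup.
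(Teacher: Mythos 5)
Your reduction of the theorem to the single binomial identity
\[
\sum_{i=2j}^{k}(-1)^i\binom{n-2i}{k-i}\binom{i-j}{j}\binom{n-i-j}{i-j}=\binom{k-j}{j}\binom{n-k-j}{j}
\]
is exactly the paper's route (the paper substitutes $p=k-i$ before stating it, but the identity is the same), and everything up to that point is correct. The gap is in your proposed proof of this identity. Your factorization $\binom{u+j}{j}\binom{n-u-3j}{u+j}=\binom{n-u-3j}{j}\binom{n-u-4j}{u}$ is itself valid, but after applying it the summand becomes
\[
(-1)^u\binom{n-4j-2u}{k-2j-u}\binom{n-3j-u}{j}\binom{n-4j-u}{u},
\]
in which all three binomial coefficients still depend on $u$ --- in particular the first has $u$ entering the upper index with coefficient $2$ --- so the sum is not of the form $\sum_u\binom{N}{u}x^u$ for any fixed $N$ and $x$, and no single application of the binomial theorem will collapse it. The evaluation $\sum_s\binom{N}{s}(-3/4)^s=(1/4)^N$ from Proposition 4.3 is not the relevant model here.

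The paper pairs the binomials differently: it first combines $\binom{n-i-j}{i-j}\binom{n-2i}{k-i}=\binom{k-j}{k-i}\binom{n-i-j}{k-j}$ (a subset-of-a-subset identity, using $(n-i-j)-(i-j)=n-2i$), then extracts the $i$-independent factor via $\binom{k-j}{k-i}\binom{i-j}{j}=\binom{k-j}{j}\binom{k-2j}{k-i}$, leaving a sum of the shape $\sum_{p=0}^{k-2j}(-1)^p\binom{k-2j}{p}\binom{n-k-j+p}{k-j}$. This is evaluated not by the binomial theorem but by the alternating identity $\sum_{p=0}^{u}(-1)^p\binom{u}{p}\binom{s+p}{t}=(-1)^u\binom{s}{t-u}$, which the paper proves by extracting the coefficient of $x^t$ from $(1+x)^s\{1-(1+x)\}^u=(1+x)^s(-x)^u$. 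Your fallback (induction on $k$ via Pascal's rule and the $k\leftrightarrow n-k$ symmetry) is only sketched; as written, the proposal does not contain a proof of the key identity, which is the substantive content of this step.
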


\begin{example}
Before proceeding to the proof of~\autoref{thm:MainTheorem}, we show some examples for small values of $k$.
\begin{itemize}
    \item For $k=1$, the sum is over $j=0$ only.
    \begin{align*}
    \mu_1(n) &= \binom{1}{0}\binom{n-1}{0}
    \end{align*}

    \item For $k=2$, the sum is over $j=0,1$.
    \begin{align*}
    \mu_2(n) &= \binom{2}{0}\binom{n-2}{0} + \frac{1}{12}\binom{1}{1}\binom{n-3}{1}
    \end{align*}

    \item For $k=3$, the sum is over $j=0,1$.
    \begin{align*}
    \mu_3(n) &= \binom{3}{0}\binom{n-3}{0} + \frac{1}{12}\binom{2}{1}\binom{n-4}{1}
    \end{align*}

    \item For $k=4$, the sum is over $j=0,1,2$.
    \begin{align*}
    \mu_4(n) &= \binom{4}{0}\binom{n-4}{0} + \frac{1}{12}\binom{3}{1}\binom{n-5}{1} + \left(\frac{1}{12}\right)^2\binom{2}{2}\binom{n-6}{2}
    \end{align*}
\end{itemize}
\end{example}

\begin{proof}[Proof of~\autoref{thm:MainTheorem}]
We begin by recalling from \autoref{cor:Xki-simplified-sec4} the relation
\begin{flalign*}
\Xval{k}{i}=\Xval{i}{i}\times\binom{n-2i}{k-i}.
\end{flalign*}

By \autoref{prop:formula-Xii},
\begin{flalign*}
\Xval{i}{i}=(-1)^i\sum_{j=0}^{\floor{i/2}}\bra{\frac{1}{12}}^j\binom{i-j}{j}\binom{n-i-j}{i-j}.
\end{flalign*}
Therefore,
\begin{flalign}
\Xval{k}{i}
&= (-1)^i\sum_{j=0}^{\fl{i/2}}\bra{\frac{1}{12}}^j
\binom{i-j}{j}\binom{n-i-j}{i-j}\binom{n-2i}{k-i} \notag\\
&= (-1)^i\sum_{j=0}^{\fl{i/2}}\bra{\frac{1}{12}}^j
\binom{i-j}{j}\binom{k-j}{k-i}\binom{n-i-j}{k-j}.
\label{eq:formula-X_ki}
\end{flalign}
The second equality follows from the identity
\[
\binom{n-i-j}{i-j}\binom{n-2i}{k-i}=\binom{k-j}{k-i}\binom{n-i-j}{k-j}.
\]
Thus, $\mu_k(n)=\sum_{i=0}^k\Xval{k}{i}$ can be expressed as
\[
\mu_k(n)=\sum_{i=0}^k\cbra{(-1)^i\sum_{j=0}^{\fl{i/2}}\bra{\frac{1}{12}}^j
\binom{i-j}{j}\binom{k-j}{k-i}\binom{n-i-j}{k-j}}.
\]
We now interchange the order of summation. The summation ranges $0\le i\le k$ and $0 \le j \le \fl{i/2}$ are equivalent to $0 \le j \le \fl{k/2}$ and $2j\le i\le k$. Therefore,
\[
\mu_k(n)
= \sum_{j=0}^{\fl{k/2}} \bra{\frac{1}{12}}^j \left( \sum_{i=2j}^k (-1)^i \binom{i-j}{j}\binom{k-j}{k-i}\binom{n-i-j}{k-j} \right).
\]
To evaluate the inner sum, let $p=k-i$. As $i$ ranges from $2j$ to $k$, $p$ ranges from $k-2j$ to~$0$. Thus,
\begin{align}
\mu_k(n)
=\sum_{j=0}^{\fl{k/2}} \bra{\frac{1}{12}}^j \left(
\sum_{p=0}^{k-2j} (-1)^{k-p} \binom{k-p-j}{j}\binom{k-j}{p}\binom{n-k+p-j}{k-j}
\right).
\label{eq:Key-prop-in-MainTheorem}
\end{align}
The inner sum is evaluated by the following proposition.
\begin{proposition}\label{prop:Key-equality}
For integers $k,j,n$ satisfying $0\le j\le\lfloor k/2\rfloor$, the following identity holds:
\[
\sum_{p=0}^{k-2j} (-1)^{k-p} \binom{k-p-j}{j}\binom{k-j}{p}\binom{n-k+p-j}{k-j}
= \binom{k-j}{j}\binom{n-k-j}{j}.
\]
\end{proposition}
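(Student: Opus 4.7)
The plan is to reduce the inner sum to a standard finite-difference identity of binomial coefficients. Set $a = k-j$ and $b = k - 2j$, so that $a - b = j$. My first move is to rearrange the product of the first two binomials using the elementary identity
\[
\binom{k-p-j}{j}\binom{k-j}{p} = \binom{k-j}{j}\binom{k-2j}{p},
\]
which is immediate from writing both sides as $\frac{(k-j)!}{j!\,p!\,(k-2j-p)!}$. This pulls the target factor $\binom{k-j}{j}$ outside the sum.

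After this rearrangement, and pulling $(-1)^k$ out as well, the remaining sum is
\[
(-1)^k\binom{k-j}{j}\sum_{p=0}^{b}(-1)^p\binom{b}{p}\,f(p),
\qquad f(p)=\binom{(n-k-j)+p}{k-j},
\]
where $f$ is a polynomial in $p$ of degree $a = k-j$. The standard forward-difference formula
\[
\sum_{p=0}^{b}(-1)^p\binom{b}{p}f(p) = (-1)^b\,\Delta^b f(0)
\]
then reduces everything to computing $\Delta^b f(0)$. Using the well-known telescoping consequence of Pascal's rule, $\Delta\binom{x+c}{a}=\binom{x+c}{a-1}$, iterated $b$ times gives
\[
\Delta^b f(0) = \binom{n-k-j}{a-b} = \binom{n-k-j}{j}.
\]

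Putting these together, the full inner sum equals
\[
(-1)^k\binom{k-j}{j}(-1)^{b}\binom{n-k-j}{j} = (-1)^{k+b}\binom{k-j}{j}\binom{n-k-j}{j},
\]
and since $k+b = 2k-2j$ is even, the sign is $+1$, yielding exactly the claimed right-hand side.

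The main obstacle I anticipate is purely bookkeeping: getting the signs in the finite-difference identity correct and confirming that the summation range $0\le p\le k-2j$ really matches the natural range for the difference operator $\Delta^{b}$ (it does, because $\binom{b}{p}$ already vanishes outside $0\le p\le b$). Once the identity $\binom{k-p-j}{j}\binom{k-j}{p}=\binom{k-j}{j}\binom{k-2j}{p}$ is spotted, the rest is a one-line application of $\Delta^{b}\binom{x+c}{a}=\binom{x+c}{a-b}$, so no genuine combinatorial difficulty remains.
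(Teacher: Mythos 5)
Your proof is correct and follows essentially the same route as the paper: the same rearrangement $\binom{k-p-j}{j}\binom{k-j}{p}=\binom{k-j}{j}\binom{k-2j}{p}$, followed by evaluation of the remaining alternating sum $\sum_{p}(-1)^p\binom{k-2j}{p}\binom{n-k-j+p}{k-j}=(-1)^{k-2j}\binom{n-k-j}{j}$, and the same final sign cancellation. The only difference is cosmetic: the paper establishes that alternating-sum identity as a separate lemma via coefficient extraction from $(1+x)^{s}\bigl(1-(1+x)\bigr)^{u}$, whereas you obtain it from the forward-difference formula and iterated Pascal's rule; both are standard and equally valid justifications of the same key identity.
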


Before proving the proposition, we note the following identity.
\begin{lemma}
\label{lem:Chu-Vandermonde-Identity}
\[
\sum_{p=0}^u (-1)^p \binom{u}{p}\binom{s+p}{t}=(-1)^u\binom{s}{t-u}.
\]
\end{lemma}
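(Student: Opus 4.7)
The plan is to prove the identity by a coefficient-extraction (generating function) argument. The key observation is that $\binom{s+p}{t}$ equals the coefficient of $x^t$ in $(1+x)^{s+p}$, which lets me rewrite the entire left-hand side as a coefficient extracted from a single product of known power series, converting the alternating binomial sum into a routine application of the binomial theorem.

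Concretely, I will factor $(1+x)^s$ outside of the coefficient-extraction and collapse the finite sum in $p$:
\[
\sum_{p=0}^u (-1)^p \binom{u}{p}\binom{s+p}{t}
= [x^t]\,(1+x)^s \sum_{p=0}^u \binom{u}{p}\bigl(-(1+x)\bigr)^p
= [x^t]\,(1+x)^s\bigl(-x\bigr)^u.
\]
Since $(1+x)^s(-x)^u = (-1)^u x^u (1+x)^s$, reading off the coefficient of $x^t$ gives $(-1)^u \binom{s}{t-u}$, which is the right-hand side.

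There is no real obstacle beyond careful sign bookkeeping; the main step is recognizing that once the $p$-dependence is isolated in a single factor $(1+x)^p$, the alternating sum telescopes via the binomial theorem. As a backup plan, I could instead proceed by induction on $u$, using Pascal's rule $\binom{u+1}{p}=\binom{u}{p}+\binom{u}{p-1}$ on the left-hand side together with $\binom{s+1}{t-u}=\binom{s}{t-u}+\binom{s}{t-u-1}$ on the right, with the trivial base case $u=0$. Both routes rely on the standard conventions $\binom{s}{t-u}=0$ when $t-u<0$ or $t-u>s$, which are compatible with the intended application (in the proof of \autoref{prop:Key-equality} the relevant values $t-u=j$ and $s=n-k-j$ are non-negative).
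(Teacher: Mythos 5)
Your proposal is correct and follows essentially the same route as the paper: both extract $\binom{s+p}{t}$ as $[x^t](1+x)^{s+p}$, factor out $(1+x)^s$, collapse the alternating sum to $(-x)^u$ by the binomial theorem, and read off the coefficient. No gaps.
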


\begin{proof}[Proof of~\autoref{lem:Chu-Vandermonde-Identity}]
Let $f(x)=\sum_{p\geq 0}a_px^p$ be a power series.
We denote the coefficient of $x^t$ in $f(x)$ by $[x^t]f(x)$.
Then $[x^t](1+x)^{s+p} = \binom{s+p}{t}$. It follows that
\begin{flalign*}
\sum_{p=0}^u (-1)^p \binom{u}{p}\binom{s+p}{t}
&= \sum_{p=0}^u (-1)^p \binom{u}{p}[x^t](1+x)^{s+p}\\
&= [x^t](1+x)^{s}\sum_{p=0}^u (-1)^p \binom{u}{p}(1+x)^{p}.
\end{flalign*}
The sum on the right is
\begin{align*}
\sum_{p=0}^u (-1)^p \binom{u}{p}(1+x)^{p}
= \cbra{1-(1+x)}^u=(-x)^u.
\end{align*}
Therefore,
\begin{flalign*}
\sum_{p=0}^u (-1)^p \binom{u}{p}\binom{s+p}{t}
&= [x^t](1+x)^{s}(-x)^u\\
&= (-1)^u[x^{t-u}](1+x)^s\\
&= (-1)^u\binom{s}{t-u}.
\end{flalign*}
\end{proof}

\begin{proof}[Proof of \autoref{prop:Key-equality}]
Let $S$ be the left-hand side of the identity in the proposition. To factor terms out of the summation, we use the identity $\binom{k-p-j}{j}\binom{k-j}{p} = \binom{k-j}{j}\binom{k-2j}{p}$. This holds because
\begin{align*}
\binom{k-j}{p}\binom{k-j-p}{j} = \frac{(k-j)!}{p!(k-j-p)!}\frac{(k-j-p)!}{j!(k-2j-p)!} = \frac{(k-j)!}{p!j!(k-2j-p)!},
\end{align*}
and
\begin{align*}
\binom{k-j}{j}\binom{k-2j}{p} = \frac{(k-j)!}{j!(k-2j)!}\frac{(k-2j)!}{p!(k-2j-p)!} = \frac{(k-j)!}{j!p!(k-2j-p)!}.
\end{align*}

Using this identity, we can rewrite $S$ as:
\begin{align*}
S &= (-1)^k \sum_{p=0}^{k-2j} (-1)^p \binom{k-j}{j}\binom{k-2j}{p}\binom{n-k+p-j}{k-j} \\
&= (-1)^k \binom{k-j}{j} \sum_{p=0}^{k-2j} (-1)^p \binom{k-2j}{p}\binom{n-k+p-j}{k-j}.
\end{align*}
Applying \autoref{lem:Chu-Vandermonde-Identity} with $s=n-k-j, ~t=k-j,$ and $u=k-2j$, the sum becomes
\[
\sum_{p=0}^{k-2j} (-1)^p \binom{k-2j}{p}\binom{n-k-j+p}{k-j} = (-1)^{k-2j}\binom{n-k-j}{k-j-(k-2j)} = (-1)^{k-2j}\binom{n-k-j}{j}.
\]
Therefore,
\[
S
= (-1)^k \binom{k-j}{j} \cdot (-1)^{k-2j}\binom{n-k-j}{j}
= (-1)^{2k-2j} \binom{k-j}{j}\binom{n-k-j}{j}
= \binom{k-j}{j}\binom{n-k-j}{j},
\]
which proves the proposition.
\end{proof}

Substituting the result of \autoref{prop:Key-equality} into the right-hand side of \eqref{eq:Key-prop-in-MainTheorem} gives
\[
\mu_k(n) = \sum_{j=0}^{\fl{k/2}} \bra{\frac{1}{12}}^j \binom{k-j}{j}\binom{n-k-j}{j}.
\]
This completes the proof of the main theorem.
\end{proof}

\begin{corollary}
\label{cor:Chern-number}
The Chern number $\langle c_k c_{n-k}, [X_{A_n}] \rangle$ of $X_{A_n}$ is given by
\[
\langle c_k c_{n-k}, [X_{A_n}] \rangle=(n+1)!\mu_k(n).
\]
\end{corollary}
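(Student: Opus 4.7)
The plan is to derive this corollary as an immediate consequence of the Main Theorem (\autoref{thm:MainTheorem}) combined with the known value of the top Chern number recorded in equation~\eqref{eq:euler-char-pairing}. There is essentially no new content beyond linearity of the pairing between cohomology and homology.

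First, I would invoke \autoref{thm:MainTheorem}, which asserts the identity
\[
c_k c_{n-k} = \mu_k(n)\, c_n
\]
inside the rational cohomology ring $H^*(X_{A_n};\mathbb{Q})$. Since $\mu_k(n) \in \mathbb{Q}$ is a scalar and $c_n$ is a class of top degree, this is a genuine equality in $H^{2n}(X_{A_n};\mathbb{Q})$.

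Next, I would apply the cap product pairing with the fundamental homology class $[X_{A_n}] \in H_{2n}(X_{A_n};\mathbb{Q})$, which is $\mathbb{Q}$-linear. Applying it to both sides yields
\[
\langle c_k c_{n-k}, [X_{A_n}] \rangle = \mu_k(n)\, \langle c_n, [X_{A_n}] \rangle.
\]

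Finally, I would substitute the value of the top Chern number recorded in equation~\eqref{eq:euler-char-pairing}, namely $\langle c_n, [X_{A_n}] \rangle = (n+1)!$, which is the Euler characteristic of $X_{A_n}$ and equals the order of the Weyl group $\mathfrak{S}_{n+1}$. This gives
\[
\langle c_k c_{n-k}, [X_{A_n}] \rangle = (n+1)!\, \mu_k(n),
\]
as claimed. There is no real obstacle here, since all the combinatorial and algebraic work has already been done in the proof of \autoref{thm:MainTheorem}; this corollary is purely a matter of pairing a cohomological identity with the fundamental class and inserting a known scalar.
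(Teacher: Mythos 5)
Your proposal is correct and follows exactly the same route as the paper: invoke \autoref{thm:MainTheorem}, use linearity of the pairing with $[X_{A_n}]$, and substitute $\langle c_n, [X_{A_n}]\rangle = (n+1)!$ from \eqref{eq:euler-char-pairing}. Nothing further is needed.
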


\begin{proof}
From the relation $c_kc_{n-k}=\mu_k(n)c_n$, we take the pairing with the fundamental homology class $[X_{A_n}]$ (cf. \cite{MilnorStasheff1974}):
\begin{flalign*}
\langle c_kc_{n-k},[X_{A_n}] \rangle
&= \langle \mu_k(n)c_n,[X_{A_n}] \rangle\\
&= \mu_k(n) \langle c_n,[X_{A_n}] \rangle\\
&= \mu_k(n) \times (n+1)! \qquad \text{(by \eqref{eq:euler-char-pairing})}
\end{flalign*}
\end{proof}

\section*{Future Work}
A natural extension of this work is to investigate the general product
$$
c_\lambda := \prod_{i=1}^s c_{\lambda_i} = \mu_\lambda(n) c_n
\quad
\text{
for any partition $\lambda = (\lambda_1, \dots, \lambda_s) \vdash n$.
}
$$
We expect that \cite[Lemma 5.1]{AHKZ2024}, which concerns the $\w_i$ classes, will provide an effective tool to extend the reduction techniques used in this paper to the general case $c_\lambda$. This lemma played an important role in the development of the Peterson Schubert calculus in type $A$.

\section*{Acknowledgements}
The author would like to express his sincere gratitude to Hiraku Abe for suggesting the problem and for his constant encouragement and invaluable discussions. He is also deeply indebted to Professors Mikiya Masuda, Tatsuya Horiguchi, Takashi Sato, and Haozhi Zeng for their many insightful and valuable comments, which greatly improved the manuscript.

\end{document}